\newcommand{\vcxymatrix}[1]{\vcenter{\xymatrix{#1}}}
\newcounter{proof}
{\stepcounter{proof}\begin{proof}}%
{\end{proof}}%
\newcounter{proofstep}[proof]
\newcounter{proofcase}[proof]
{\refstepcounter{proofcase}\bigskip\par\noindent%
  \textbf{Case \theproofcase.}%
  \noindent}%
{\par}%
\theoremstyle{plain}
\newtheorem{thm}{Theorem}[section]
\newtheorem*{thm*}{Theorem}
\newtheorem{lem}[thm]{Lemma}
\theoremstyle{definition}
\theoremstyle{remark}
\numberwithin{equation}{section}
\newcommand{\bb}[1]{\ensuremath{\mathbb #1}}
\newcommand{\cal}[1]{\ensuremath{\mathcal #1}}
\newcommand{\scr}[1]{\ensuremath{\mathscr #1}}
\DeclareMathOperator{\union}{\cup}
\DeclareMathOperator*{\Union}{\bigcup}
\DeclareMathOperator{\isect}{\cap}
\DeclareMathOperator*{\Isect}{\bigcap}
\newcommand{\bmo}[1][]{\ensuremath{BMO_{#1}(\delta)}}
\newcommand{\hardy}[1][]{\ensuremath{H_{#1}^1(\delta)}}
\newcommand{\bmos}[1][]{\ensuremath{BMO_{#1}(\delta^2)}}
\newcommand{\hardys}[1][]{\ensuremath{H_{#1}^1(\delta^2)}}
\newcommand{\cc}[1]{\ensuremath{\llbracket #1 \rrbracket}}
\newcommand{\charfun}{\scalebox{1.0}{\ensuremath{\mathbbm 1}}}
\newcommand{\dint}{\scr D}
\newcommand{\drec}{\scr R}
\newcommand{\dif}{\ensuremath{\, \mathrm d}}
\DeclareMathOperator{\sign}{sign}
\DeclareMathOperator{\Id}{Id}
\DeclareMathOperator{\spn}{span}
\DeclareMathOperator{\pred}{\pi}
\DeclareMathOperator{\lev}{lev}
\DeclareMathOperator{\lesslex}{<_\ell}
\DeclareMathOperator{\dilesslex}{\prec_\ell}
\DeclareMathOperator{\drless}{\triangleleft\,}
\DeclareMathOperator{\drlesseq}{\trianglelefteq\,}
\newcommand{\drindex}{\ensuremath{\cal O_{\drless}}}
\begin{document}

\title{Localization and projections on bi--parameter BMO}

\author[R. Lechner]{Richard Lechner}
\address{
  Richard Lechner,
  Institute of Analysis,
  Johannes Kepler University Linz,
  Altenberger Strasse 69,
  A-4040 Linz, Austria
}
\email{Richard.Lechner@jku.at}

\author[P.F.X. M\"uller]{Paul F.X. M\"uller}
\address{
  Paul F.X. M\"uller,
  Institute of Analysis,
  Johannes Kepler University Linz,
  Altenberger Strasse 69,
  A-4040 Linz, Austria
}
\email{Paul.Mueller@jku.at}

\date{\today}
\subjclass[2010]{46B25, 60G46, 46B07, 46B26, 30H35}
\keywords{Classical Banach spaces, bi--parameter BMO, factorization, primary, localization,
  combinatorics of colored dyadic rectangles, quasi--diagonalization, projections}
\thanks{The research on this paper is supported by the Austrian Science Foundation (FWF)
  Pr.Nr. P23987 P22549 and by the Hausdorff Institute for Mathematics, Bonn}

\begin{abstract}
  We prove that for any operator $T$ on bi--parameter BMO the identity factors through $T$ or
  $\Id - T$.
  Bourgain's localization method provides the conceptual framework of our proof.
  It consists in replacing the factorization problem on the non--separable bi--parameter BMO by
  its localized, finite dimensional counterpart.
  We solve the resulting finite dimensional factorization problems by exploiting the geometry and
  combinatorics of colored dyadic rectangles.
\end{abstract}

\maketitle

\section{Introduction}\label{s:intro}

\noindent
The dyadic intervals $\dint$ on the unit interval are given by
\begin{equation*}
  \dint = \{ [2^{-j}k, 2^{-j}(k+1)[ \, :\, j,k \in \bb N_0, k \leq 2^{j}-1 \},
\end{equation*}
and the dyadic rectangles $\drec$ on the unit square by
$\drec = \dint \times \dint$.
For any given dyadic interval $I \in \dint$ we define the $L^\infty$ normalized Haar function $h_I$,
to be $+1$ on the left half of $I$ and $-1$ on the right half of $I$.
Given two dyadic intervals $I,J$ we have
\begin{equation*}
  h_{I\times J}(s,t) = h_I(s)\, h_J(t),
  \qquad s,t \in [0,1[.
\end{equation*}
We define the bi--parameter space $\hardys$ to be the completion of
\begin{equation*}
  \spn\{ h_{I\times J}\, :\, I\times J \in \drec \}
\end{equation*}
under the norm
\begin{equation}\label{eq:definition_hardys_norm}
  \|f\|_{\hardys}
  = \int_0^1 \int_0^1 \big(\sum a_{I\times J}^2\, h_{I\times J}^2 \big)^{1/2}\dif s \dif t,
\end{equation}
where is the finite linear combination $f = \sum a_{I\times J} h_{I\times J}$.
The dual of $\hardys$ is denoted $\bmos$.
It consists of bi--parameter functions $f = \sum a_{I\times J} h_{I\times J}$ with
$\|f\|_{\bmos} < \infty$, where
\begin{equation}\label{eq:definition_bmos_norm}
  \|f\|_{\bmos}
  = \Big( \sup_{\Omega \text{ open}} \frac{1}{|\Omega|}
    \sum_{I\times J\subset \Omega} a_{I\times J}^2\, |I\times J|
  \Big)^{1/2}.
\end{equation}
For basic information and background we refer to~\cite{bernard:1979}, \cite{bourgain:1982},
\cite{brossard:1980}, \cite{chang:1983}, \cite{chang-fefferman:1985}, \cite{gundy:1980}
and~\cite{maurey:1980}.

The main result of this paper is the following theorem.
\begin{thm}[\textbf{Main Theorem}]\label{thm:bmo-primary}
  For any operator
  \begin{equation*}
    S\, :\, \bmos\rightarrow \bmos
  \end{equation*}
  the identity on $\bmos$ factors through $H = S$ or $H = \Id-S$, that is
  \begin{equation}\label{intro:eqn:primarity-bmo}
    \vcxymatrix{\bmos \ar[r]^\Id \ar[d]_E & \bmos\\
      \bmos \ar[r]_H & \bmos \ar[u]_P}
    \qquad \|E\|\|P\| \leq C,
  \end{equation}
  where $C>0$ is some universal constant.
\end{thm}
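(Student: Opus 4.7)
The plan is to follow Bourgain's localization method: reduce the factorization problem on the non--separable space $\bmos$ to a sequence of finite dimensional factorization problems on the spans of Haar functions $h_{I\times J}$ with $I\times J$ ranging over a finite subfamily of $\drec$, and then solve each finite dimensional problem by a combinatorial selection of a sub--collection of dyadic rectangles on which $S$ (or $\Id-S$) acts almost as a multiple of the identity. The localization step itself is essentially formal once the quantitative finite dimensional factorization is established: a $w^*$--compactness/ultraproduct argument passes from the finite dimensional factorizations with uniform constants back to a factorization on all of $\bmos$.

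For the finite dimensional problem I would fix $S\colon \bmos\to\bmos$ and, using the duality with $\hardys$, examine its Haar matrix entries $\langle S h_{I\times J}, h_{K\times L}\rangle/|K\times L|$. Split $S=D+N$ into its diagonal and off--diagonal parts with respect to the Haar basis. Color each rectangle $R=I\times J$ by $\sign\bigl(\langle S h_R, h_R\rangle/|R|-1/2\bigr)$. After possibly replacing $S$ by $\Id-S$ (and taking a large sub--collection by pigeonholing on the diagonal values), I may assume that on a sub--collection $\cal A\subset\drec$ the diagonal part $D$ is uniformly close to $\tfrac12\,\Id$ in the $\bmos$ norm restricted to $\spn\{h_R:R\in\cal A\}$.

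The combinatorial core of the proof is then to select, inside $\cal A$, a family of pairwise disjoint blocks of rectangles and signed block sums $b_{I\times J}=\sum_{R\in\cal B_{I\times J}}\varepsilon_R h_R$, indexed by $I\times J\in\drec$, with the following two properties. First, the block basis $\{b_{I\times J}\}$ is equivalent to the Haar basis of $\bmos$ with uniform constants, so that the natural map $h_{I\times J}\mapsto b_{I\times J}$ defines an isomorphic embedding $E\colon\bmos\embed\bmos$ onto its image. Second, the off--diagonal part $N$ is small on the block subspace, so that $S$ restricted to the blocks is within a Neumann series of $\tfrac12\,\Id$. Once both properties are available, the factorization $\Id=P\circ S\circ E$ follows by taking $P$ to be twice the natural conditional expectation (or orthogonal--type projection) onto the block subspace, precomposed with the Neumann inverse of $2S$ on that subspace.

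The main obstacle, and the heart of the paper, is the combinatorial selection producing the blocks in the bi--parameter setting. Unlike the one--parameter case, one cannot iterate Gamlen--Gaudet type arguments on intervals: dyadic rectangles have two independent scales and an arbitrary open set $\Omega$ in the $\bmos$--supremum can couple these scales in complicated ways. I would therefore expect to spend most of the effort on quantitative statements about colored dyadic rectangles organized by stripes of fixed first--coordinate generation, controlling simultaneously the off--diagonal interactions across both scales and the $\bmos$ norm of the resulting block basis. The abstract identification of $\bmos$ on the sub--collection with $\bmos$ on all of $\drec$, and the passage from finite dimensional quasi--diagonalization to the global factorization, should be comparatively routine once this combinatorial backbone is in place.
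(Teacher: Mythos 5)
Your high--level plan correctly identifies Bourgain's localization framework, the almost--diagonalization idea, the need for a block basis equivalent to the Haar basis, and a Neumann--series inversion on the block subspace. However, two steps that you describe as routine are in fact where the substantive work happens, and your proposed versions of them have genuine gaps.

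First, the passage from the finite dimensional quantitative factorization back to a factorization on all of $\bmos$ is \emph{not} a $w^*$--compactness or ultraproduct argument. The paper explicitly exhibits a bounded rank--one projection on $\bmos$ that is not weak$^*$ continuous, so one cannot treat a general operator $S$ on $\bmos$ as an adjoint, and $w^*$--limiting devices do not apply. An ultraproduct argument would produce a factorization of the identity on the ultraproduct $(\bmos)^{\mathcal U}$, which is a strictly larger space, and there is no formal way to pull such a factorization back to $\bmos$ through the same operator $S$. What the paper actually does is use Wojtaszczyk's isomorphism $\bmos \sim \big(\sum_n \bmos[n]\big)_\infty$, then (Theorem~\ref{thm:product-diag-bmo}) reduces a general operator on this $\ell^\infty$--sum to a diagonal operator $D = RTE$, $\Id - D = R(\Id-T)E$, by inductively selecting coordinates and using the almost--annihilation Theorem~\ref{thm:annihil-bmo}. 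This diagonal reduction is what allows the finite dimensional factorizations to be assembled coherently into one on the whole space; you cannot skip it.

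Second, "pigeonholing on the diagonal values" is not strong enough: what you need is Ramsey's theorem for colored dyadic rectangles (Theorem~\ref{thm:ramsey}), which from a two--coloring of $\drec_n$ produces a \emph{product} sub--collection $\scr A\times\scr B$ that is monochromatic and has $\cc{\scr A}$, $\cc{\scr B}$ both large. A naive pigeonhole gives a large monochromatic set of rectangles but with no product structure, and then you cannot run Gamlen--Gaudet. Relatedly, your remark that one "cannot iterate Gamlen--Gaudet type arguments" in the bi--parameter setting is a misconception about where the difficulty lies: once Ramsey has recovered the product structure $\scr A\times\scr B$, the factorization~\eqref{prelims:eq:condensation-2d} \emph{does} follow by applying the one--parameter Gamlen--Gaudet step separately in each variable; the entire bi--parameter difficulty is concentrated in producing that product structure and in the combinatorial Lemma~\ref{lem:comb-1} that keeps one coordinate of the covering rectangles intact during the inductive almost--diagonalization. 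Finally, the coloring has to be applied to the block--basis diagonal $|\langle T b_{I\times J}, b_{I\times J}\rangle|$ obtained \emph{after} almost--diagonalization, not to the Haar diagonal $\langle S h_R, h_R\rangle$ of the original operator, and the resulting lower bound is $\geq \tfrac12\|b_{I\times J}\|_2^2$ rather than "$D$ close to $\tfrac12\Id$"; the projecting operator is a weighted version of $Q$ (with weights $\langle H b_{I\times J},b_{I\times J}\rangle^{-1}$), not twice a conditional expectation.
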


\noindent
As a consequence, $\bmos$ is a primary Banach space.
Recall that a Banach space $X$ is primary if for any projection $S\, :\, X\rightarrow X$ one of the
spaces $S(X)$ or $(\Id_X - S)(X)$ is isomorphic to $X$.
For background on this classical isomorphic invariant concept we refer
to~\cite{lindenstrauss-tzafriri:1977,mueller:2005,wojtaszczyk:1991}.

One cannot directly deduce the result that $\bmos$ is primary from the previously known result that
$\hardys$ is primary~\cite{mueller:1994}.
To see this, we remark that there exists a projection on $\bmos$ that is not weak$^*$ continuous,
and therefore is not the adjoint of an operator on the predual $\hardys$.
Indeed, given a Banach limit $\ell : \ell^\infty\to \mathbb R$ and a collection of disjoint dyadic
rectangles $\{R_1,R_2,\ldots\}$, let us define the rank one projection
$S : \bmos \to \bmos$ by
\begin{equation*}
  S f
  = \ell \big( (\frac{\langle f, h_{R_n} \rangle}{|R_n|})_{n=1}^\infty \big)
  \sum_{j=1}^\infty h_{R_j}.
\end{equation*}
One can easily verify that $S$ is a bounded linear projection that is not weak$^*$ continuous,
since $f_n = \sum_{j\geq n} h_{R_j}$ is a weak$^*$ null sequence and
$S f_n = \sum_{j=1}^\infty h_{R_j}$.

Our proof of the main theorem is based on the localization method introduced by J. Bourgain
in~\cite{bourgain:1983}.
See also~\cite{bourgain_tzafriri:1987, bourgain_tzafriri:1989}
and~\cite{pelczynski_rosenthal:1974_75} for one of the first papers in this direction.
Bourgain's method is particularly useful for treating factorization problems on non--separable
Banach spaces such as $\bmos$.
It aims at replacing~\eqref{intro:eqn:primarity-bmo} by its localized, finite dimensional
counterpart, and in our context it consists of three basic steps.
\begin{enumerate}[(i)]
\item The starting point consists in applying Wojtaszczyk's isomorphism~\cite{wojtaszczyk:1979} to
  the space $\bmos$ and its finite dimensional building blocks
  $\bmos[n] = \spn\{ h_{I\times J}\, :\, I\times J \in \drec,\, |I|,|J|\geq 2^{-n} \}
  \isect \bmos$.
  This gives
  \begin{equation*}
    \bmos \sim \Big( \sum_n \bmos[n] \Big)_\infty,
  \end{equation*}
  where we use the notation $X\sim Y$ to denote that $X$ is isomorphic to $Y$.

\item Reduction to diagonal operators on $\Big( \sum_n \bmos[n] \Big)_\infty$.

\item
  Verification of the following finite dimensional and quantitative factorization problem:
  For any $n \in \bb N$ and $M > 0$ there exists $N=N(n, M)$ such that for any operator
  $T\, :\, \bmos[N]\rightarrow \bmos[N]$ with $\|T\|\leq M$ we have that $H = T$ or $H = \Id-T$
  satisfies
  \begin{equation}\label{intro:eqn:finite-main-result-hardy}
    \vcxymatrix{\bmos[n] \ar[r]^\Id \ar[d]_E & \bmos[n]\\
      \bmos[N] \ar[r]_H & \bmos[N] \ar[u]_P}
    \qquad \|E\|\|P\|\leq C,
  \end{equation}
  where $C>0$ is some universal constant.
\end{enumerate}
The most challenging aspect in connection with the localization method of Bourgain consists in
proving the finite dimensional factorization problem~\eqref{intro:eqn:finite-main-result-hardy}
while simultaneously controlling $N$ in terms of $n$.
The one--parameter factorization problems -- solved in~\cite{mueller:1988} -- are both the model
case and also a special case of our present problem.
See also~\cite{blower:1990, mueller:2005, mueller:2012, wark:2007}.

\subsection*{Organization of the paper}\hfill\\
\noindent
Section~\ref{s:prelims} lists the preliminary theorems, definitions and concepts.
Section~\ref{s:results} states our main technical results (finite dimensional quantitative
factorization and almost--diagonalization theorems).
Section~\ref{s:reduction} contains the proof of the almost--diagonalization theorem.
Section~\ref{s:factorization} restates the main theorem (infinite dimensional factorization) and
gives its detailed proof.

\subsection*{Acknowledgements}\hfill\\
\noindent
We would like to thank the referee for a careful examination of the manuscript and a very helpful
report.

\section{Preliminaries}\label{s:prelims}

\subsection*{Basic notation}\hfill\\

\noindent
Here we collect basic notation and definitions.
We refer to~\cite{mueller:2005} for reference.
Recall that $\dint$ denotes the dyadic subintervals of the unit interval.
Let $\pred\, :\, \dint\setminus\{[0,1)\} \rightarrow \dint$ denote the dyadic predecessor map, that
is $\pred(I) = \Isect\{J \in \dint\, :\, J \supsetneq I\}$.
The level $\lev(I)$ of a dyadic interval $I \in \dint$ is defined as $\lev(I) = -\log_2(|I|)$.
The collection $\scr D_j$ of dyadic intervals at level $j$ is given by
$\dint_j = \{I\in \dint\, :\, \lev(I) = j \}$ and we set $\dint^n = \Union_{j \leq n}\dint_j$.
For $n \in \bb N$ we define
\begin{equation*}
  \drec_n = \big\{ I \times J \in \drec\, :\, I,J \in \dint^n \big\}.
\end{equation*}

Given a collection of sets $\scr C$ we define
\begin{equation*}
  \scr C^* = \Union \{C\, :\, C \in \scr C\}.
\end{equation*}
If $A$ is some set, then $\scr C \isect A = \{C\isect A\, :\, C \in \scr C\}$.
The Carleson constant $\cc{\scr A}$ of a collection $\scr A \subset \dint$ is given
by
\begin{equation*}
  \cc{\scr A} = \sup_{I \in \scr A} \sum_{J \in \scr A\isect I} |J|/|I|.
\end{equation*}
Note that $\cc{\scr A \union \scr B} \leq \cc{\scr A} + \cc{\scr B}$ for any two collections
$\scr A, \scr B \subset \dint$.

For any given dyadic interval $I \in \dint$ we define
$h_I = \charfun_{I_0} - \charfun_{I_1}$, where $\charfun_A$ denotes the characteristic function of a
set $A$, $I_0 = [\inf I, (\inf I + \sup I)/2[$ and $I_1 = [(\inf I + \sup I)/2, \sup I[$.
The one parameter hardy space $\hardy$ is the completion of
\begin{equation*}
  \spn\{ h_I\, :\, I \in \dint \}
\end{equation*}
under the square function norm
\begin{equation*}
  \|f\|_{\hardy}
  = \int_0^1 \big(\sum a_I^2\, h_I^2 \big)^{1/2}\dif t,
\end{equation*}
where $f = \sum a_I h_I$.
We set
\begin{align*}
  \hardy[n] &= \spn\{ h_I\, :\, I \in \dint^n \} \isect \hardy,\\
  \bmo[n] &= \spn\{ h_I\, :\, I \in \dint^n \} \isect \bmo.
\end{align*}

\subsection*{The Gamlen--Gaudet factorization}\hfill\\

\noindent
We recall the relation between large Carleson constants and factorization, see~\cite{mueller:2005}.
Let $\scr A$ be a collection of dyadic intervals satisfying $\cc{\scr A} \geq N$ and define
\begin{equation*}
  X_{\scr A} = \spn\{ h_I\, :\, I \in \scr A \} \isect \hardy.
\end{equation*}
If $N = n 4^n$, then there exist linear operators $E$ and $P$ so that
\begin{equation}\label{prelims:eq:condensation-1d}
  \vcxymatrix{
    \hardy[n] \ar[rr]^\Id \ar[dr]_E & & \hardy[n]\\
    & X_{\scr A} \ar[ur]_P &
  }
  \qquad \|E\|\|P\| \leq C.
\end{equation}

The bi--parameter analogues of these finite dimensional building blocks are:
\begin{align*}
  \hardys[n] &= \spn\{ h_{I\times J}\, :\, I\times J \in \drec_n \} \isect \hardys,\\
  \bmos[n] &= \spn\{ h_{I\times J}\, :\, I\times J \in \drec_n \} \isect \bmos.
\end{align*}
In the bi--parameter context, factorization and large Carleson constants are related for collections
of rectangles having product structure.
Given collections of dyadic intervals $\scr A,\, \scr B\subset \dint$ we define the product space
$X_{\scr A\times \scr B}$ by
\begin{equation*}
  X_{\scr A\times \scr B}
  = \spn\{ h_{I\times J}\, :\, I\times J \in \scr A\times \scr B \} \isect \hardys.
\end{equation*}
If $N = n 4^n$ and $\cc{\scr A} \geq N$, $\cc{\scr B} \geq N$, then there exist linear operators
$E$ and $P$ such that
\begin{equation}\label{prelims:eq:condensation-2d}
  \vcxymatrix{
    \hardys[n] \ar[rr]^\Id \ar[dr]_E & & \hardys[n]\\
    & X_{\scr A\times \scr B}\ar[ur]_P &
  }
  \qquad \|E\|\|P\| \leq C.
\end{equation}
Due to product structure of $X_{\scr A\times \scr B}$, the bi--parameter
factorization~\eqref{prelims:eq:condensation-2d} results directly from its one--parameter
predecessor~\eqref{prelims:eq:condensation-1d}.
In the next paragraph we discuss Ramsey's theorem for colored dyadic rectangles.
Its relevance for the constructions of this paper comes from the fact that for any two--coloring of
$\drec_n$, Ramsey's theorem detects a large monochromatic collection of the form
$\scr A\times \scr B$.

\subsection*{Ramsey theorem for colored dyadic rectangles}\hfill\\

\noindent
Ramsey's theorem asserts that for any two--coloring of the dyadic rectangles
\begin{equation*}
  \drec_n = \big\{ I\times J\, :\, |I|\geq 2^{-n},\ |J| \geq 2^{-n} \big\}
\end{equation*}
there exist collections $\scr A$, $\scr B$ of dyadic intervals, each of which has
large Carleson constant and, moreover,
\begin{equation*}
  \text{$\scr A \times \scr B$ is monochromatic in $\drec_n$.}
\end{equation*}
For later reference we state this assertion in the following theorem.
\begin{thm}\label{thm:ramsey}
  Given $n_0 \in \bb N$ there exists $n \in \bb N$ such that for any collection $\scr
  C \subset \drec_n$ one finds $\scr A,\scr B \subset \dint$ satisfying
  \begin{enumerate}[(i)]
  \item $\scr A \times \scr B \subset \scr C$
    or
    $\scr A \times \scr B \subset \drec_n \setminus \scr C$,
  \item $\cc{\scr A} \geq n_0$ and $\cc{B} \geq n_0$.
  \end{enumerate}
  One can choose $n = n_0\, 2^{4^{n_0}}$.
\end{thm}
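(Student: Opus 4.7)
The plan is a two-step pigeonhole argument driven by subadditivity of the Carleson constant. Iterating the inequality $\cc{\scr{A} \cup \scr{B}} \leq \cc{\scr{A}} + \cc{\scr{B}}$ recorded in the preliminaries, any finite partition $\scr{F} = \scr{F}_1 \sqcup \cdots \sqcup \scr{F}_k$ of a subcollection of $\dint$ satisfies $\cc{\scr{F}} \leq \sum_i \cc{\scr{F}_i}$, so at least one part has $\cc{\scr{F}_i} \geq \cc{\scr{F}}/k$. An elementary count at the root $[0,1)$ gives $\cc{\dint^m} = m+1$ for every $m$. This yields a one--parameter Ramsey statement for Carleson constants that I would use as my only tool: \emph{every $k$-coloring of $\dint^m$ admits a monochromatic subcollection of Carleson constant at least $(m+1)/k$.}

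I apply this principle twice, once in each variable. Fix $m = 2 n_0 - 1$, so that $\cc{\dint^m} = 2 n_0$ and $|\dint^m| = 2^{m+1} - 1 \leq 4^{n_0}$, and set $\scr{B}_0 := \dint^m$. To each $I \in \dint^n$ associate the signature $\sigma_I \in \{0,1\}^{\scr{B}_0}$ defined by $\sigma_I(J) = 1$ if $I \times J \in \scr{C}$ and $\sigma_I(J) = 0$ otherwise. Since at most $2^{|\scr{B}_0|} \leq 2^{4^{n_0}}$ signatures occur, partitioning $\dint^n$ according to signature and invoking the one--parameter Ramsey principle on this partition produces a subcollection $\scr{A} \subset \dint^n$ whose members all carry a common signature $\sigma$, with
\[
  \cc{\scr{A}} \;\geq\; \frac{\cc{\dint^n}}{2^{|\scr{B}_0|}} \;\geq\; \frac{n+1}{2^{4^{n_0}}} \;\geq\; n_0
\]
once $n = n_0\, 2^{4^{n_0}}$. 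Every $I \in \scr{A}$ then induces the same two-coloring $\sigma : \scr{B}_0 \to \{0,1\}$, to which I apply the one-parameter Ramsey principle a second time, extracting a monochromatic $\scr{B} \subset \scr{B}_0$ with $\cc{\scr{B}} \geq (m+1)/2 = n_0$. For every $I \in \scr{A}$ and $J \in \scr{B}$ the color of $I \times J$ equals $\sigma_I(J) = \sigma(J) = c_0$, constant in both variables, so $\scr{A} \times \scr{B} \subset \scr{C}$ or $\scr{A} \times \scr{B} \subset \drec_n \setminus \scr{C}$ according as $c_0 = 1$ or $c_0 = 0$, giving (i) and (ii) simultaneously.

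I expect the only subtle point to be bookkeeping the constants: the depth $m$ must be of order $n_0$ for the second Ramsey step to return Carleson at least $n_0$, while the signature space $\{0,1\}^{\scr{B}_0}$ must simultaneously have cardinality at most $2^{4^{n_0}}$ for the first step to return Carleson at least $n_0$. The choice $m = 2 n_0 - 1$ achieves both balances and reproduces the announced $n = n_0\, 2^{4^{n_0}}$.
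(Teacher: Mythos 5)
Your proof is correct and rests on the same two ingredients as the paper's: subadditivity of the Carleson constant under unions and the count $\cc{\dint^m} = m+1$. The difference is organizational. Where you do a single pigeonhole over all signature classes in $\{0,1\}^{\dint^{2n_0-1}}$ and select the dominant one as $\scr A$, the paper greedily constructs $\scr B \subset \dint^n$ by iterating over $I_1, I_2, \ldots \in \dint^{2n_0-1}$ one at a time and, at each step, retaining the half of $\scr G_{m-1}$ (split according to whether $I_m \times J \in \scr C$) with the larger Carleson constant; the function $f$ records the sequence of choices, so the final $\scr B$ is exactly the signature class $\{J : \sigma_J = f\}$ that the greedy descent lands on, and $\scr A$ is then extracted from $\dint^{2n_0-1}$ by a single further halving of $f^{-1}(0) \cup f^{-1}(1)$. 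In effect the roles of the two coordinates are swapped relative to yours (your $\scr A$ comes from the deep collection $\dint^n$ and your $\scr B$ from $\dint^{2n_0-1}$, the paper the other way around), which is immaterial since the statement is symmetric, and both routes deliver the identical bound $\cc{\cdot} \geq (n+1)\,2^{-(4^{n_0}-1)} > n_0$. Your packaging, isolating a one-parameter Ramsey principle and applying it twice, is arguably the cleaner presentation; the paper's greedy descent avoids explicitly enumerating the $2^{4^{n_0}-1}$ signature classes but encodes the same information step by step.
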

For the above formulation of Ramsey's theorem we refer to~\cite{mueller:1994}.
See also~\cite[Chapter 1]{graham:rothschild:spencer:1980}.
For convenience, we give the proof here.
\begin{proof}
  Define $n = n_0\, 2^{4^{n_0}}$ and let $\mathscr C \subset \drec_n$.
  Define $k = 2 n_0 -1$ and let $I_1, \ldots, I_{2^{k+1}-1}$ be an enumeration of the dyadic
  intervals in $\dint^k$.
  First, we set $\mathscr E_0 = \mathscr F_0 = \mathscr G_0 = \dint^n$, $I_0 = \emptyset$ and
  $f(I_0) = \infty$.
  Second, assuming that $\mathscr E_j$, $\mathscr F_j$, $\mathscr G_j$ and $f(I_j)$ have already
  been constructed for all $0 \leq j \leq m-1$, we define the collections
  \begin{equation*}
    \mathscr E_m = \{ J \in \mathscr G_{m-1}\, :\, I_m\times J \notin \mathscr C\}
    \quad\text{and}\quad
    \mathscr F_m  = \{ J \in \mathscr G_{m-1}\, :\, I_m\times J \in \mathscr C\}.
  \end{equation*}
  If $\cc{\mathscr E_m} \geq \cc{\mathscr F_m}$, we set $f(I_m) = 0$, and if
  $\cc{\mathscr E_m} < \cc{\mathscr F_m}$, we set $f(I_m) = 1$.
  To conlude the inductive step, we define $\mathscr G_m = \mathscr E_m$ if $f(I_m) = 0$, and
  $\mathscr G_m = \mathscr F_m$ if $f(I_m) = 1$.

  Observe that $\mathscr E_m \cup \mathscr F_m = \mathscr G_{m-1}$ and
  $\cc{\mathscr G_m} = \max(\cc{\mathscr E_m},\cc{\mathscr F_m})$.
  By subadditivity of $\cc{\cdot}$ we obtain $2 \cc{\mathscr G_m} \geq \cc{\mathscr G_{m-1}}$, and
  by iterating
  $\cc{\mathscr G_m} \geq 2^{-m} (n + 1)$.
  For $m = 2^{k+1}-1$, we set $\mathscr B = \mathscr G_{m}$, and
  observe that by our choices $\cc{\mathscr B} \geq 2^{-4^{n_0}+1}(n+1) \geq n_0$.
  Now define
  \begin{equation*}
    \mathscr H_0 = \{ I \in \dint^k\, :\, f(I) = 0 \}
    \qquad\text{and}\qquad
    \mathscr H_1  = \{ I \in \dint^k\, :\, f(I) = 1 \},
  \end{equation*}
  and note that $\mathscr H_0 \cup \mathscr H_1 = \dint^k$.
  By definition of $f$ and $\mathscr G_j$ we have
  \begin{equation*}
    \mathscr H_0\times \mathscr B \subset (\dint^k\times \dint^k)\setminus \mathscr C
    \qquad\text{and}\qquad
    \mathscr H_1\times \mathscr B \subset \mathscr C.
  \end{equation*}
  Now, let $\mathscr A = \mathscr H_0$ if $\cc{\mathscr H_0} \geq \cc{\mathscr H_1}$, and
  $\mathscr A = \mathscr H_1$ if $\cc{\mathscr H_0} < \cc{\mathscr H_1}$.
  We conclude this proof by observing that by our choices we have
  \begin{equation*}
    2 \cc{\mathscr A}
    \geq \cc{\mathscr H_0} + \cc{\mathscr H_1}
    \geq \cc{\mathscr H_0 \cup \mathscr H_1}
    = \cc{\dint^k} = k+1 = 2 n_0.
    \qedhere
  \end{equation*}
\end{proof}

\subsection*{Block bases and projections in $\hardys$}\hfill\\

\noindent
We introduce next some frequently used terminology and record a boundedness criterion for
projections on $\hardys$.
We say that a sequence $ \{b_{I \times J}\, :\, I\times J \in \drec \}$ in a Banach space $X$ is
equivalent to the unconditional 2D Haar basis $\{ h_{I\times J}\, :\, I\times J \in \drec \}$ in
$\hardys$ if the following holds:
The map
\begin{equation*}
  T : \sum a_{I \times J}\, h_{I\times J} \to \sum a_{I \times J}\, b_{I \times J}
\end{equation*}
defined initially on finite linear combinations of 2D Haar functions and extended by density to
$\hardys$ satisfies
\begin{equation*}
  C_1^{-1} \|x\|_{\hardys} \leq \|T(x)\|_X \leq C_1\, \|x\|_{\hardys},
  \quad\quad
  x \in \hardys.
\end{equation*}
Let $\{\scr E_{I\times J } : I\times J \in \drec \}$ be pairwise disjoint collections of dyadic
rectangles and let
$E_{I\times J } = \scr E_{I\times J}^* = \Union\limits_{K\times L \in \scr E_{I\times J}} K\times L$ be
the point-set covered by the collection $\scr E_{I\times J }$.
We denote by
\begin{equation*}
  b_{I\times J } = \sum _{K\times L\in \scr E_{I\times J }} h_{K\times L}
\end{equation*}
the block-basis generated by $\scr E_{I\times J}$.
We assume throughout, that $\|b_{I\times J}\|_2^2 = |E_{I\times J }|$ or equivalently that
$\scr E_{I\times J}$ consists of pairwise disjoint dyadic rectangles.
We formulate conditions on the collections $\{\scr E_{I\times J }\}$ so that the block basis
$\{b_{I\times J } \}$ is equivalent to the 2D Haar system.
The sets  $\{E_{I\times J } : I\times J \in \drec \} $ satisfy the \textbf{bi-tree condition} if
there exists $C_2 > 0 $ so that for each $I\times J \in \drec$
\begin{subequations}\label{eq:bi-tree}
\begin{equation}\label{eq:bi-tree:a}
  C_2^{-1} | I \times J | \leq |E_{I\times J}| \leq C_2\, |{I\times J }|.
\end{equation}
and for $(I_0 \times J_0), (I_1 \times J_1) \in \drec$ with $I = \widetilde I_0 = \widetilde I_1$
and $J = \widetilde J_0 = \widetilde J_1$ we have
\begin{align}
  E_{I_0\times J } \isect E_{I_1\times J} & = \emptyset,
  & E_{I_0\times J }\cup E_{I_1 \times J} & \subset E_{I\times J},
  \label{eq:bi-tree:b}\\
  E_{I\times J_0 } \isect E_{I\times J_1 } & = \emptyset,
  & E_{I\times J_0 }\cup E_{I \times J_1} & \subset E_{I\times J}.
  \label{eq:bi-tree:c}
\end{align}  
\end{subequations}
If~\eqref{eq:bi-tree} is satisfied, then the block basis
$\{b_{I\times J }\, :\, I\times J\in \drec\}$ is equivalent to the 2D Haar system in$\hardys$ and
$C_1 = C_1(C_2)$.
The following theorem is a basic tool that allows to project onto the span of the block bases
$\{b_{I\times J }  : I\times J \in \drec\}$.
It was instrumental in proving that $\hardys$ is a primary space, see~\cite{capon:1982}
and~\cite{mueller:1994}.
In the present paper, the main component of the factoring operator $P$ appearing in
Theorem~\ref{thm:bmo-primary} consists of a weighted version of the following orthogonal projection
$Q$ onto block basis.

\begin{thm}\label{pro:projection}
  Let $\scr E_{I\times J }$, $I\times J \in \drec$ be pairwise disjoint collections consisting of
  disjoint dyadic rectangles.
  Let $E_{I\times J} = \scr E_{I\times J}^*$.
  Assume that $\{ E_{I\times J}\, :\, I, J \in \dint\} $ is a bi-tree, then the following hold
  \begin{enumerate}[(i)]
  \item The block basis $\{b_{I\times J } : I\times J \in \drec \}$ is equivalent to the
    2D-Haar basis in $\hardys$ with $C_1 = C_1(C_2)$.
  \item If there exists $C_3 > 0 $ so that for each $I\times J, I_0\times J_0 \in \drec$ with
    $I\times J \supset I_0\times J_0$ and for every $K \times L \in \scr E_{I\times J}$ we have
    \begin{equation}\label{prelims:2d-jones}
      C_3^{-1} \frac{ |E_{I_0\times J_0}|  }{|E_{I\times J}|}
      \leq \frac{|( K\times L) \isect E_{I_0\times J_0}|}{| K\times L|} 
      \leq  C_3\, \frac{|E_{I_0\times J_0}|}{|E_{I\times J}|},
    \end{equation}
    then the orthogonal projection
    \begin{equation*}
      Qf = \sum \langle f ,b_{I\times J}\rangle \frac{b_{I\times J}}{\|b_{I\times J}\|_2^{2}}
    \end{equation*}
    defines a bounded operator on $\hardys$ with norm only depending on $C_3$ and $C_2$.
  \end{enumerate}
\end{thm}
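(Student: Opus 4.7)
The plan is to prove (i) by a direct square-function/rearrangement argument exploiting the bi-tree structure, and then deduce (ii) by reducing through (i) to a square-function domination controlled by the 2D Jones condition~\eqref{prelims:2d-jones}.

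For (i), the key observation is that since the collection $\scr E_{I\times J}$ consists of pairwise disjoint dyadic rectangles on each of which the underlying Haar functions take values $\pm 1$, we have $b_{I\times J}^2 = \charfun_{E_{I\times J}}$. Consequently,
\begin{equation*}
  \Big\|\sum a_{I\times J}\, b_{I\times J}\Big\|_{\hardys}
  = \int\!\!\int \Big(\sum a_{I\times J}^2\, \charfun_{E_{I\times J}}\Big)^{1/2}\dif s\dif t,
\end{equation*}
and the task is to compare this quantity with the corresponding expression in which $E_{I\times J}$ is replaced by $I\times J$. The bi-tree conditions~\eqref{eq:bi-tree:a}--\eqref{eq:bi-tree:c} ensure that the family $\{E_{I\times J}\}$ has, level by level, the same nesting/disjointness profile as $\{I\times J\}$ with measures comparable up to $C_2$. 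I would perform a two-sided pointwise comparison using a measure-preserving correspondence built inductively along the bi-tree hierarchy; this reduces the matter to estimating, for each point, a sum of coefficients indexed by a chain of ancestors in $\drec$, the cardinality of which is controlled by $C_2$.

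For (ii), since $\{b_{I\times J}\}$ is $L^2$-orthogonal (pairwise disjoint supports within each $\scr E_{I\times J}$, and pairwise disjoint collections), $Q$ acts explicitly by
\begin{equation*}
  Qf = \sum_{I\times J} \alpha_{I\times J}(f)\, b_{I\times J},
  \qquad
  \alpha_{I\times J}(f)
  = \frac{1}{|E_{I\times J}|}\sum_{K\times L \in \scr E_{I\times J}} \langle f, h_{K\times L}\rangle.
\end{equation*}
Part (i), applied to the block basis $\{b_{I\times J}\}$, reduces the problem to proving
\begin{equation*}
  \Big\|\sum \alpha_{I\times J}(f)\, h_{I\times J}\Big\|_{\hardys} \leq C \|f\|_{\hardys},
\end{equation*}
which via the defining norm~\eqref{eq:definition_hardys_norm} is a square-function inequality. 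Writing $f=\sum c_{K\times L} h_{K\times L}$, the coefficient $\alpha_{I\times J}(f)$ becomes a weighted sum of $c_{K\times L}$ over $K\times L \in \scr E_{I\times J}$, and the 2D Jones condition~\eqref{prelims:2d-jones} allows us to represent it, up to the constant $C_3$, as an average of $f$ against a function comparable to $\charfun_{E_{I\times J}}/|E_{I\times J}|$. Iterating this comparison up the bi-tree chain of ancestors yields a pointwise domination of $|\alpha_{I\times J}(f)|\, \charfun_{I\times J}$ by a bi-parameter averaging operator applied to $|f|$.

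The main obstacle is carrying out this reduction cleanly in the bi-parameter setting: the 2D Jones estimate is essentially a rectangular quasi-doubling statement, but to move from averages over $K\times L$ to averages over $E_{I_0\times J_0}$ one must iterate it along the product hierarchy while keeping track of cancellation across both parameters, and then invoke a bi-parameter strong maximal inequality (rather than the one-parameter Doob maximal function). Once the pointwise square-function domination is established, the required $\hardys$-bound follows, with constants depending only on $C_2$ and $C_3$ as claimed.
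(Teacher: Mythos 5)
A preliminary note: the paper does not prove Theorem~\ref{pro:projection} --- it is imported from the literature with references to Capon~\cite{capon:1982} and M\"uller~\cite{mueller:1994}, so there is no in-paper proof to compare against. Your sketch of part (i) is sound as far as it goes: $b_{I\times J}^2=\charfun_{E_{I\times J}}$ is correct, the reduction to comparing the two square functions is the right frame, and the bi-tree conditions do propagate nesting and disjointness from $\{I\times J\}$ to $\{E_{I\times J}\}$ with constants controlled by $C_2$.

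Part (ii) has a genuine gap in the closing step. You propose to establish a ``pointwise square-function domination'' and then ``invoke a bi-parameter strong maximal inequality.'' Neither piece holds up. The strong (rectangular) maximal operator is not bounded on $L^1$ --- it only maps $L\log L$ to weak $L^1$ --- and since $\hardys$ embeds in $L^1$, it cannot be used as a black box to convert a pointwise estimate into an $\hardys$-norm bound; this failure at the $L^1$ endpoint is precisely one of the characteristic obstacles of bi-parameter theory. Moreover, a literal pointwise domination of $\big(\sum\alpha_{I\times J}(f)^2\charfun_{I\times J}\big)^{1/2}$ by $C\big(\sum c_{K\times L}^2\charfun_{K\times L}\big)^{1/2}$ is false: take $f=h_{K_0\times L_0}$ with $K_0\times L_0\in\scr E_{I_0\times J_0}$; the left-hand square function is supported on $I_0\times J_0$ while the right is supported on $K_0\times L_0$, and these supports are in general essentially disjoint. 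What the 2D Jones condition~\eqref{prelims:2d-jones} actually delivers is a \emph{rearranged} comparison: it permits the construction of a (near) measure-preserving substitution adapted to the bi-tree hierarchy, under which the square function of $Qf$ is dominated by a rearrangement of the square function of $f$, after which integrating gives the $\hardys$ bound with no maximal operator at all. That substitution map is the engine in the Capon and M\"uller proofs --- it also provides the common scaffolding for parts (i) and (ii) --- and it is what is missing from your outline.
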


\subsection*{Rademacher type functions in $\hardys$ and $\bmos$}\hfill\\

\noindent
We define the following Rademacher type system as block basis of the Haar system.
Given $r \geq k_0$ and $K_0\times L_0 \in \drec$ with $|K_0| = 2^{-k_0}$ we specify the following
functions.
First, for any choice of signs we set
\begin{equation*}
  d_i = \sum_{K \in D_i\isect K_0} \pm h_K,
  \qquad i \geq r.
\end{equation*}
Then it is easy to see that if we define
\begin{equation*}
  g_i(s,t) = d_i(s)\, h_{L_0}(t),
  \qquad s,t \in [0,1]
\end{equation*}
for each dyadic interval $L_0$, then by~\eqref{eq:definition_hardys_norm} and duality we have
\begin{equation}\label{eq:sum_of_rademacher-estimate}
  \big\| \sum_{i=r}^{r+k-1} g_i \big\|_{\hardys} = \sqrt{k} |L_0|
  \qquad\text{and}\qquad
  \big\| \sum_{i=r}^{r+k-1} g_i \big\|_{\bmo} = \sqrt{k}.
\end{equation}

\section{Localized facorization}\label{s:results}

Here we prove our quantitative factorization theorem which is one of the three major steps towards
the proof of our main theorem.

The main result of this paper is the following quantitative factorization
theorem~\ref{thm:local-hardy}.
\begin{thm}\label{thm:local-hardy}
  For $n \in \bb N$ and $M > 0$ there exists $N=N(n,M)$ so that the following holds:
  For any operator $T\, :\, \hardys[N]\rightarrow \hardys[N]$ with $\|T\|\leq M$ the identity on
  $\hardys[n]$ factors through $H = T$ or $H = \Id-T$ such that
  \begin{equation*}
    \vcxymatrix{\hardys[n] \ar[r]^\Id \ar[d]_E & \hardys[n]\\
      \hardys[N] \ar[r]_H & \hardys[N] \ar[u]_P}
    \qquad \|E\|\|P\|\leq C,
  \end{equation*}
  where $C > 0$ is a universal constant.
\end{thm}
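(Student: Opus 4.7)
The plan is to combine Ramsey's theorem (Theorem~\ref{thm:ramsey}), an almost-diagonalization step that replaces $T$ by an essentially diagonal operator on a block basis, and the bi-parameter Gamlen--Gaudet factorization~\eqref{prelims:eq:condensation-2d}. The freedom to choose between $H = T$ and $H = \Id - T$ will be used to guarantee diagonal dominance of $H$ on a large product subcollection. Concretely, I first form the diagonal coefficients $\lambda_{I\times J} = \langle T h_{I\times J}, h_{I\times J}\rangle / \|h_{I\times J}\|_2^2$ and two-color $\drec_N$ according to whether $|\lambda_{I\times J}| \geq 1/2$. By Theorem~\ref{thm:ramsey} applied at a suitably large parameter $n_0$, there exist $\scr A, \scr B \subset \dint$ with $\cc{\scr A}, \cc{\scr B} \geq n_0$ such that $\scr A \times \scr B$ is monochromatic. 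Choosing $H = T$ or $H = \Id - T$ according to the color, the diagonal coefficients $\mu_{I\times J}$ of $H$ then satisfy $1/2 \leq |\mu_{I\times J}| \leq M+1$ for every $I\times J \in \scr A \times \scr B$.

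Next I would invoke the almost-diagonalization theorem (proved in Section~\ref{s:reduction}) to pass from the Haar functions indexed by $\scr A \times \scr B$ to a block basis $\{b_{I\times J}\}$ indexed by a further product subcollection $\scr A'\times \scr B'$, built from Rademacher-type sums as in~\eqref{eq:sum_of_rademacher-estimate}. This step should deliver $\cc{\scr A'}, \cc{\scr B'} \geq n 4^n$, collections $\scr E_{I\times J}$ satisfying the bi-tree condition~\eqref{eq:bi-tree} and the Jones-type condition~\eqref{prelims:2d-jones}, and a splitting $H b_{I\times J} = \mu_{I\times J}\, b_{I\times J} + r_{I\times J}$ whose remainders are so small that they can be absorbed by a Neumann series. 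Theorem~\ref{pro:projection} then ensures $\{b_{I\times J}\}$ is Haar-equivalent and that the orthogonal projection $Q$ onto its span is bounded on $\hardys$. With these tools I set $E\, :\, \hardys[n] \to \hardys[N]$ to be the composition of the Gamlen--Gaudet embedding $\hardys[n] \to X_{\scr A' \times \scr B'}$ from~\eqref{prelims:eq:condensation-2d} with the Haar-to-block identification $h_{I\times J}\mapsto b_{I\times J}$, and $P\, :\, \hardys[N] \to \hardys[n]$ to be the composition of $Q$, the diagonal renormalization $b_{I\times J}\mapsto \mu_{I\times J}^{-1}\, b_{I\times J}$, the block-to-Haar identification, and the Gamlen--Gaudet projection onto $\hardys[n]$. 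Ignoring remainders one has $P H E = \Id_{\hardys[n]}$; the remainders $r_{I\times J}$ are then absorbed by a Neumann series at a constant cost, yielding $\|E\|\|P\| \leq C$.

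The central obstacle is the almost-diagonalization step: one must simultaneously preserve the bi-tree and Jones conditions for the sets $E_{I\times J}$, retain the lower bound on $|\mu_{I\times J}|$ in passing from the Haar basis to the block basis, and force the off-diagonal entries of $H$ on the block basis to be negligible. Rademacher-type averaging inside each rectangle is the natural device here, exploiting the $\sqrt{k}$ asymmetry in~\eqref{eq:sum_of_rademacher-estimate} between the $\hardys$ and $\bmos$ norms to beat down off-diagonal coefficients. Tracking how the Carleson constants and the Ramsey parameter deteriorate through these successive operations (Ramsey contributing a factor of the form $n_0 2^{4^{n_0}}$ and Gamlen--Gaudet requiring $\cc{\scr A'}, \cc{\scr B'} \geq n 4^n$) then yields the (tower-type) bound $N = N(n, M)$.
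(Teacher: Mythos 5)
You have all the right ingredients (Ramsey, almost--diagonalization, Gamlen--Gaudet, Neumann series), but you apply them in the wrong order, and this creates a genuine gap. You two--color $\drec_N$ by the size of the \emph{Haar} diagonal coefficients $\lambda_{I\times J} = \langle T h_{I\times J}, h_{I\times J}\rangle/\|h_{I\times J}\|_2^2$, extract a monochromatic product $\scr A\times \scr B$ via Ramsey, choose $H$ so that $|\mu_{I\times J}| \geq 1/2$ on $\scr A\times\scr B$ at the Haar level, and only then invoke almost--diagonalization to pass to a block basis $\{b_{I\times J}\}$. But the lower bound on the diagonal of $H$ does not survive this last step: the block--basis diagonal coefficient
\[
  \frac{\langle H b_{I\times J}, b_{I\times J}\rangle}{\|b_{I\times J}\|_2^2}
  = \frac{1}{\|b_{I\times J}\|_2^2}
  \sum_{K\times L,\, K'\times L' \in \scr E_{I\times J}} \langle H h_{K\times L}, h_{K'\times L'}\rangle
\]
is \emph{not} an average of the Haar diagonal entries $\langle H h_{K\times L}, h_{K\times L}\rangle$; the double sum contains all the off--diagonal terms, which are uncontrolled until \emph{after} almost--diagonalization, and even the purely diagonal part can cancel because the $\langle H h_{K\times L}, h_{K\times L}\rangle$ carry arbitrary signs. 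Theorem~\ref{thm:quasi-diag} says nothing about preserving a lower bound on the diagonal of $H$ over a prescribed Haar sub--basis; it is a statement about off--diagonal smallness for a general operator, independent of any coloring. You yourself flag ``retain the lower bound on $|\mu_{I\times J}|$'' as part of the central obstacle, but Rademacher averaging does not resolve it: the asymmetry \eqref{eq:sum_of_rademacher-estimate} is the mechanism that suppresses the off--diagonal inside the almost--diagonalization, not a device that rescues the diagonal.

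The paper reverses the order precisely to avoid this. It first applies Theorem~\ref{thm:quasi-diag} at level $N_1 = N_2\,2^{4^{N_2}}$, $N_2 = n4^n$, obtaining a block basis $\{b_{I\times J}\,:\, I\times J \in \drec_{N_1}\}$ on which $T$ (hence also $\Id - T$) has summably small off--diagonal entries, with \emph{no} condition imposed on the diagonal. Only then does it two--color $\drec_{N_1}$ by
\[
  \scr C = \big\{ I\times J \in \drec_{N_1}\,:\, |\langle T b_{I\times J}, b_{I\times J}\rangle| \geq \tfrac{1}{2}\|b_{I\times J}\|_2^2\big\},
\]
apply Ramsey to get $\scr A\times\scr B$ monochromatic with $\cc{\scr A}, \cc{\scr B} \geq N_2$, and choose $H$ accordingly; the lower bound $|\langle H b_{I\times J}, b_{I\times J}\rangle| \geq \tfrac{1}{2}\|b_{I\times J}\|_2^2$ then holds by construction for $I\times J \in \scr A\times \scr B$. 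After this reordering the remainder of your plan --- Gamlen--Gaudet on $X_{\scr A\times\scr B}$, Theorem~\ref{pro:projection} for the bounded projection $Q$, and the Neumann series to absorb the off--diagonal error --- does match the paper's proof.
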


The proof is based on the following three theorems.
\begin{enumerate}[(i)]
\item Ramsey's theorem~\ref{thm:ramsey} for colored dyadic rectangles.
\item The projection theorem~\ref{pro:projection}.
\item The almost-diagonalization theorem~\ref{thm:quasi-diag} stated below.
\end{enumerate}
These three theorems combined provide the reduction from general operators in
theorem~\ref{thm:local-hardy} to multipliers on the Haar system.

\subsection*{The almost-diagonalization theorem}\hfill\\

\noindent
We now state the almost-diagonalization theorem~\ref{thm:quasi-diag} and show that in combination
with Ramsey's theorem~\ref{thm:ramsey} for colored dyadic rectangles and the projection
theorem~\ref{pro:projection} it yields the proof of our main result, Theorem~\ref{thm:local-hardy}.
\begin{thm}\label{thm:quasi-diag}
  Let $n \in \bb N$, $M > 0$ and
  $\{ \varepsilon_{I\times J}\, :\, I\times J \in \scr R_n \}$ be a given set of small positive
  scalars.
  Then there exists $N = N(n,M, \{\varepsilon_{I\times J}\})$ such that for any linear operator
  $T : \hardys[N]\rightarrow \hardys[N]$ with $\|T\| \leq M$ there exist disjoint collections
  $\scr E_{I\times J}$, indexed by $I\times J \in \drec_n$, consisting of pairwise disjoint dyadic
  rectangles defining the functions
  \begin{equation*}
    b_{I\times J} = \sum_{K\times L \in \scr E_{I\times J}} h_{K\times L},
  \end{equation*}
  which satisfy the following conditions:
  
  \begin{enumerate}[(i)]
  \item $\scr E_{I\times J} \subset \drec_N$ and $|b_{I\times J}| \leq 1$
    for all $I\times J \in \drec_n$.
    \label{enu:thm:quasi-diag-1}

  \item The orthogonal projection
    \begin{equation*}
      Q(f) = \sum_{I\times J \in \drec_n}
      \big\langle f, \frac{b_{I\times J}}{\|b_{I\times J}\|_2} \big\rangle\,
      \frac{b_{I\times J}}{\|b_{I\times J}\|_2}
    \end{equation*}
    is a bounded operator on $\hardys$ with $Q(\hardys) = \spn\{b_{I\times J}\}$ satisfying
    \begin{equation*}
      \|Q\, :\, \hardys \to \hardys\| \leq C_2,
    \end{equation*}
    for some universal constant $C_2 > 0$.
    \label{enu:thm:quasi-diag-2}

  \item The map $S\, :\, \hardys[n] \rightarrow \spn\{b_{I\times J}\} \isect \hardys[N]$ defined as
    the linear extension of $h_{I\times J} \mapsto b_{I\times J}$ is an isomorphism with
    \begin{equation}\label{thm:quasi-diag-ii}
      \|S\| \|S^{-1}\| \leq C_3,
    \end{equation}
    for some universal constant $C_3 > 0$.
    \label{enu:thm:quasi-diag-3}

  \item We have the estimate
    \begin{equation}\label{thm:quasi-diag-iii}
      \sum_{K\times L \neq I\times J} | \langle T b_{K\times L}, b_{I\times J} \rangle |
      \leq \varepsilon_{I \times J} \|b_{I\times J}\|_2^2,
    \end{equation}
    for all $I \times J \in \drec_n$.
    \label{enu:thm:quasi-diag-4}
  \end{enumerate}
\end{thm}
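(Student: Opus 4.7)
The plan is to construct the collections $\scr E_{I\times J}$ by an iterative procedure that processes the rectangles of $\drec_n$ in a fixed order, reserving a distinct band of very fine scales for each, and then uses a Rademacher-type averaging argument in that band to enforce the off-diagonal bound~\eqref{thm:quasi-diag-iii}. First I would fix a linear order on $\drec_n$ compatible with inclusion (ancestors before descendants), and pre-commit to a bi-tree skeleton $\{E_{I\times J}\}_{I\times J\in\drec_n}$ organized exactly as in~\eqref{eq:bi-tree}, with each $E_{I\times J}\subset I\times J$ built as a union of many well-spread fine rectangles so that the Jones-type condition~\eqref{prelims:2d-jones} is also satisfied. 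This part follows the standard product construction of \cite{capon:1982, mueller:1994}. Once this geometric skeleton is in place, Theorem~\ref{pro:projection} will automatically deliver conditions~\ref{enu:thm:quasi-diag-2} and~\ref{enu:thm:quasi-diag-3} for any block basis whose supports exhaust the $E_{I\times J}$; these two conditions are therefore \emph{design invariants} that I need only preserve throughout the construction.

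The substantive step is then the inductive choice of $\scr E_{I\times J}\subset\drec_N$ with $\scr E_{I\times J}^*=E_{I\times J}$ so that~\eqref{thm:quasi-diag-iii} holds. For each $I\times J\in\drec_n$ I would reserve inside $E_{I\times J}$ a deep scale window $[r_{I\times J},r_{I\times J}+k]$, with $k=k(n,M,\varepsilon_{I\times J})$ large, inside which the available rectangles can be partitioned into $2^k$ equally good candidate subfamilies $\scr E_{I\times J}^\alpha$, each giving a candidate block
\begin{equation*}
  b_{I\times J}^\alpha\ =\ \sum_{K\times L\,\in\,\scr E_{I\times J}^\alpha} h_{K\times L},
\end{equation*}
of the same $L^2$-norm and all respecting the skeleton. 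Processing rectangles in order, when it is time to commit $b_{I\times J}$ I would invoke a pigeonhole/averaging argument of Rademacher type (exactly the mechanism behind~\eqref{eq:sum_of_rademacher-estimate}): for each fixed test function $g$, randomizing the choice of $\alpha$ makes $E_\alpha |\langle g,b_{I\times J}^\alpha\rangle|^2\leq 2^{-k}\|g\|_2^2\,\|b_{I\times J}^\alpha\|_2^2$ times a harmless constant. Applying this with the $\leq 2|\drec_n|$ test functions $g=T b_{K\times L}$ and $g=T^* b_{K\times L}$, $K\times L\in\drec_n\setminus\{I\times J\}$, and taking a union bound, some $\alpha$ satisfies
\begin{equation*}
  |\langle T b_{K\times L},b_{I\times J}^\alpha\rangle|+|\langle T^* b_{K\times L},b_{I\times J}^\alpha\rangle|\ \leq\ \frac{\varepsilon_{I\times J}}{2|\drec_n|}\,\|b_{I\times J}^\alpha\|_2^2
\end{equation*}
for every $K\times L$ that has been or will be processed. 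Summing over $K\times L$ gives~\eqref{thm:quasi-diag-iii}, and $|b_{I\times J}|\leq 1$ and $\scr E_{I\times J}\subset\drec_N$ are immediate from the construction, which covers~\ref{enu:thm:quasi-diag-1}.

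The main obstacle is the simultaneous management of three competing demands on $\scr E_{I\times J}$: the bi-tree/Jones geometry fixes where Haar functions may live; the Rademacher averaging needs exponentially many genuinely equivalent candidates inside that region; and both the backward estimates (against already-chosen $b_{K\times L}$) and the forward estimates (against $b_{K\times L}$ not yet constructed) must be secured by a single commitment. The forward direction is handled by the rigidity of the skeleton: the supports of all later $b_{K\times L}$ are contained in regions disjoint from or nested in $E_{I\times J}$ in a way prescribed in advance, so forward pairings $\langle T b_{I\times J}, b_{K\times L}\rangle$ reduce to the dual pairings $\langle T^* b_{K\times L},b_{I\times J}\rangle$ with test functions that can already be listed. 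Careful accounting of scales, using the windows $[r_{I\times J},r_{I\times J}+k]$ disjointly over the $\leq 4^n$ rectangles of $\drec_n$, then yields the quantitative bound $N=N(n,M,\{\varepsilon_{I\times J}\})$ asserted in the theorem.
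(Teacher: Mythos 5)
Your proposal has the right overall flavor — process $\drec_n$ in a compatible linear order, use a Rademacher/pigeonhole mechanism at each stage to kill the off-diagonal pairings, and appeal to the bi-tree/Jones machinery of Theorem~\ref{pro:projection} for boundedness of $Q$ — but it contains two genuine gaps that the paper's argument is specifically designed to avoid.

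The first gap is in the handling of forward estimates, and you cannot wave it away with ``rigidity of the skeleton''. At the moment you commit to $b_{I\times J}$ you do not know $b_{K\times L}$ for any later $K\times L$; knowing $E_{K\times L}$ (a point set) does not determine $b_{K\times L}$ (a specific block combination), so the test functions $T^*b_{K\times L}$ for later $K\times L$ \emph{cannot} ``already be listed'', and the union bound you invoke has nothing to bound against. The paper's fix is structural: the local frequency weight~\eqref{eq:local_frequency_weight:1}/\eqref{eq:local_frequency_weight:2} at stage $i$ incorporates \emph{both} $T b_j$ and $T^* b_j$ for all earlier $j<i$. Consequently at stage $i$ one controls $|\langle Tb_j,b_i\rangle|$ and $|\langle T^*b_j,b_i\rangle|$ for $j<i$; the forward pairing $|\langle Tb_i,b_j\rangle|=|\langle T^*b_j,b_i\rangle|$ for a later $j>i$ is \emph{not} secured at stage $i$ at all — it is secured at stage $j$, exactly as in~\eqref{eq:quasi-diag-estimate-a}--\eqref{eq:quasi-diag-estimate-b}. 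A single commitment at stage $i$ is never asked to beat objects that do not yet exist.

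The second gap is that pre-committing the bi-tree skeleton $\{E_{I\times J}\}$ is inconsistent with your averaging step. If the $2^k$ candidate subfamilies $\scr E_{I\times J}^\alpha$ are disjoint (so that the $b_{I\times J}^\alpha$ are orthogonal and the variance bound $\mathbb E_\alpha|\langle g,b^\alpha\rangle|^2\lesssim 2^{-k}\|g\|_2^2\|b^\alpha\|_2^2$ can hold) and have equal $L^2$-norm, then each has $|\scr E_{I\times J}^{\alpha,*}|\approx 2^{-k}|E_{I\times J}|$, so the committed block's support is a tiny subset of the pre-declared $E_{I\times J}$, and neither the bi-tree conditions~\eqref{eq:bi-tree} nor the Jones-type condition~\eqref{prelims:2d-jones} survive the commitment. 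If instead the candidates are tilings of all of $E_{I\times J}$ at distinct fine scales (so the support genuinely is $E_{I\times J}$), there are only $O(k)$ of them, and the gain degrades from $2^{-k}$ to $1/k$; that is in fact the correct order of magnitude, and it is exactly what Lemma~\ref{lem:comb-1} exploits — a pigeonhole over roughly $i^2/(\delta^2\tau^2)$ scales that produces a \emph{$(1-\delta)$-fraction} of a parent set with small frequency weight. The measure loss is then unavoidable, which is precisely why the paper does not pre-commit $E_{I\times J}$: it lets the supports shrink controllably and recovers the bi-tree and Jones estimates only a posteriori, via the Gamlen-Gaudet selections~\eqref{eq:step-1:block-basis-collection},~\eqref{eq:step-2:block-basis-collection}, the local product estimate~\eqref{eq:summary:measure-estimate}, and the telescoping product with $\delta_i=2^{-i}/(8n)$ leading to~\eqref{eq:thm:quasi-diag:13}. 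Without that bookkeeping your conditions~\ref{enu:thm:quasi-diag-2} and~\ref{enu:thm:quasi-diag-3} are not ``design invariants'' — they are the main analytic output of the induction.
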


\noindent
The proof of the almost-diagonalization theorem~\ref{thm:quasi-diag} is given in Section~\ref{s:reduction}.

\subsection*{Proof of Theorem~\ref{thm:local-hardy}}\hfill\\

\noindent
Let $n \in \bb N$, $M > 0$.
We define $N$ by the chain of the following conditions:
\begin{equation}\label{eq:definition_dimensions}
  \begin{aligned}
    N & = N(N_1,M,\{\varepsilon_{I\times J}\}),
    &  N_1 &= N_2\, 2^{4^{N_2}},
    & N_2 &= n4^n,
  \end{aligned}
\end{equation}
where $\{\varepsilon_{I\times J}\, :\, I\times J \in \drec_{N_1}\}$ is a collection of positive
scalars satisfying
\begin{equation}\label{eq:definition_epsilon}
  \sum_{I\times J \in \drec_{N_1}} \varepsilon_{I\times J}
  \leq \frac{1}{4}.
\end{equation}
Let $T\, :\, \hardys[N]\rightarrow \hardys[N]$ be an operator such that $\|T\| \leq M$.
Now apply Theorem~\ref{thm:quasi-diag} to $T$.
This gives a block basis $\{b_{I\times J}\, :\, I\times J \in \drec_{N_1}\}$ satisfying the
conclusions~\eqref{enu:thm:quasi-diag-1}--\eqref{enu:thm:quasi-diag-4} of
Theorem~\ref{thm:quasi-diag}.
The Ramsey theorem~\ref{thm:ramsey} for colored dyadic rectangles applied to
\begin{equation*}
  \scr C = \{ I\times J \in \drec_{N_1}\, :\,
    |\langle T b_{I\times J}, b_{I\times J} \rangle| \geq \|b_{I\times J}\|_2^2/2
  \}
\end{equation*}
yields collections $\scr A, \scr B \subset \dint^{N_1}$, with Carleson constants
$\cc{\scr A} \geq N_2$ and $\cc{\scr B} \geq N_2$, such that
$\scr A \times \scr B \subset \scr C$ or
$\scr A \times \scr B \subset \drec_{N_1} \setminus \scr C$.
We choose $H = T$ if $\scr A \times \scr B \subset \scr C$ and
$H = \Id - T$ if $\scr A \times \scr B \subset \drec_{N_1} \setminus \scr C$.

The following lower estimate will be essential below:
\begin{equation}\label{eq:H-large}
  |\langle H b_{I\times J}, b_{I\times J} \rangle| \geq \|b_{I\times J}\|_2^2/2,
  \qquad I\times J \in \scr A\times \scr B.
\end{equation}
We define the product space $X_{\scr A\times \scr B}$ by
\begin{equation*}
  X_{\scr A\times \scr B}
  = \spn\{ h_{I\times J}\, :\, I\times J \in \scr A\times \scr B \} \isect \hardys.
\end{equation*}
Since $\cc{\scr A} \geq N_2$, $\cc{\scr B} \geq N_2$, we know
from~\eqref{prelims:eq:condensation-2d} that there exists a universal constant $C > 0$ so that
\begin{equation*}
  \vcxymatrix{
    \hardys[n] \ar[r]^\Id \ar[d]_{E_0} & \hardys[n]\\
    X_{\scr A\times \scr B} \ar[r]^\Id & X_{\scr A\times \scr B} \ar[u]_{P_0}
  }
  \qquad \|E_0\|\|P_0\| \leq C.
\end{equation*}
We claim that Theorem~\ref{thm:quasi-diag} and the choices we made
in~\eqref{eq:definition_dimensions},\eqref{eq:definition_epsilon} and~\eqref{eq:H-large} imply that
there exist linear operators $S_1$ and $P_1$ such that
\begin{equation*}
  \vcxymatrix{X_{\scr A\times \scr B} \ar[r]^\Id \ar[d]_{S_1} & X_{\scr A\times \scr B}\\
    \hardys[N] \ar[r]^H & \hardys[N] \ar[u]_{P_1}
  }
  \qquad \|E_1\|\|P_1\| \leq C,
\end{equation*}
for some universal constant $C > 0$.
For the verification of the claim we remark that the method lined out
in~\cite[288--290]{mueller:2005} is directly applicable:
The isomorphic embedding
\begin{equation*}
  S_1\, :\, X_{\scr A\times \scr B} \to \spn\{b_{I\times J}\, :\, I\times J \in \scr A\times \scr B\}
\end{equation*}
is defined as the linear extension of the map
\begin{equation*}
  h_{I\times J} \mapsto b_{I\times J}.
\end{equation*}
For the norm estimate of $S_1$ we refer to~\eqref{thm:quasi-diag-ii}.
Next, define
\begin{equation*}
  \widetilde P_1\, :\, \hardys[N]\to \spn\{b_{I\times J}\, :\, I\times J \in \scr A\times \scr B\}
\end{equation*}
by the formula
\begin{equation*}
  f\mapsto \sum_{I\times J \in \scr A\times \scr B} \langle f, b_{I\times J}\rangle b_{I\times J}
  \langle Hb_{I\times J}, b_{I\times J}\rangle^{-1},
\end{equation*}
and observe that $\|\widetilde P_1\| \leq 2 \|Q\|$.
We observe that for $g \in \spn\{b_{I\times J}\, :\, I\times J \in \scr A\times \scr B\}$ we have
\begin{equation*}
  \widetilde P_1Hg = g + Gg,
\end{equation*}
where the error term $Gg$ is controlled via
$2\sum_{I\times J \in \drec_{N_1}} \varepsilon_{I\times J} \leq 1/2$ by the following operator
norm estimate
\begin{equation*}
  \big\|G\, :\, \spn\{b_{I\times J}\, :\, I\times J \in \scr A\times \scr B\}
  \to \spn\{b_{I\times J}\, :\, I\times J \in \scr A\times \scr B\}\big\|_{\hardys} \leq \frac{1}{2}.
\end{equation*}
Hence, we may invert $\Id+G$ on $\spn\{b_{I\times J}\, :\, I\times J \in \scr A\times \scr B\}$ so that
\begin{equation*}
  (\Id+G)^{-1} \widetilde P_1Hg = g,
  \qquad g \in \spn\{b_{I\times J}\, :\, I\times J \in \scr A\times \scr B\}.
\end{equation*}
Note that $\|(\Id+G)^{-1}\| \leq 2$.
This defines $P_1$ as follows:
\begin{equation*}
  P_1 f = S_1^{-1}(\Id+G)^{-1} \widetilde P_1 f,
  \qquad f \in X_{\scr A\times \scr B}.
\end{equation*}
We should emphasize that $S_1^{-1}$ is well defined on the range of $(\Id +G)^{-1}$ and furthermore
$(\Id +G)^{-1}$ is well defined on the range of $\widetilde P_1$.

Finally, it remains to merge the diagrams yielding the following factorization:
\begin{equation*}
  \vcxymatrix{\hardys[n] \ar[r]^\Id \ar[d]_{E} & \hardys[n]\\
    \hardys[N] \ar[r]^H & \hardys[N] \ar[u]_{P}
  }
  \qquad \|E\|\|P\| \leq C,
\end{equation*}
where $C > 0$ is a universal constant.

\section{Quantitative almost-diagonalization}\label{s:reduction}

\noindent
In this section we give the proof of Theorem~\ref{thm:quasi-diag}.
Our argument is inductive.
We use induction within the collection of dyadic rectangles.
It is therefore important that we introduce a suitable linear ordering relation on the collection of
dyadic rectangles.
Below we specifically construct the linear ordering relation $\drless$ so that the bijective index
function $\drindex\, :\, \drec \rightarrow \bb N$, which is defined by
\begin{equation*}
  \drindex(R_0) < \drindex(R_1)
  \Leftrightarrow R_0 \drless R_1,
  \qquad R_0,R_1 \in \drec,
\end{equation*}
has the following properties~\eqref{eq:ordering-1}
and~\eqref{eq:ordering-2}.
For a picture of the index function $\drindex$ see Figure~\ref{fig:ordering-relation}.
The geometry of a dyadic rectangle and its position within our linear ordering $\drless$ are linked
by the inequalities
\begin{equation}\label{eq:ordering-1}
  (2^k -1)^2 < \drindex(I \times J) \leq (2^{k+1} -1)^2,
  \qquad \text{whenever $\min(|I|,|J|) = 2^{-k}$},
\end{equation}
as well as
\begin{equation}\label{eq:ordering-2}
  4\, | I_1\times J_1 | \leq \frac{|I_0\times J_0|}{\min(|I_1|,|J_1|)},
  \qquad \text{whenever $I_0\times J_0 \drless I_1\times J_1$}.
\end{equation}
Any linear orderings on the dyadic rectangles for which~\eqref{eq:ordering-1}
and~\eqref{eq:ordering-2} hold may serve as basis for our induction argument in the proof of
Theorem~\ref{thm:quasi-diag}.

\subsection{Constructing the linear ordering relation $\drless$ on $\drec$}\label{ss:linear_order}\hfill\\

\noindent
First, we define the rectangles of fixed side lengths $2^{-m}$ and $2^{-n}$ by setting
\begin{equation}\label{eq:linear_order-macroblock}
  \scr B_{m,n} = \{I\times J \in \drec\, :\, |I|=2^{-m}, |J|=2^{-n}\},
  \qquad m,n \geq 0.
\end{equation}
Second, we will define the ordering relation $\dilesslex$ on each of the blocks $\scr B_{m,n}$.
Given two dyadic rectangles $I_0\times J_0,\, I_1\times J_1 \in \scr B_{m,n}$ we set
\begin{equation*}
  I_0\times J_0 \dilesslex I_1\times J_1\,
  :\Leftrightarrow (\inf I_0,\inf J_0) \lesslex (\inf I_1,\inf J_1),
\end{equation*}
where $\lesslex$ denotes the lexicographic ordering on $\bb R^2$.
Third, we shall collect the blocks $\scr B_{m,n}$ in the collections
\begin{equation*}
  \scr S_k = \{\scr B_{m,n}\, :\, \max(m,n) = k\},
  \qquad k \geq 0.
\end{equation*}
Third, we need to bring the blocks $\scr B_{m,n}$ in order.
To this end, we consider
\begin{equation*}
  w : \{\scr B_{m,n} : m,n \geq 0\} \rightarrow \bb N_0
\end{equation*}
such that the following conditions hold for all $k \geq 1$:
\begin{enumerate}[(i)]
\item $w|\scr S_k\, :\, \scr S_k \rightarrow \{k^2,\ldots,(k+1)^2-1\}$ is bijective.
\item we set $w(\scr B_{0,k}) = k^2$ and moreover
  \begin{equation*}
    w(\scr B_{m_0,n_0}) < w(\scr B_{m_1,n_1}) \Leftrightarrow
    \begin{cases}
      m_0 > n_0 \text{ and } m_1 \leq n_1,\\
      m_0 > n_0 \text{ and } m_1 > n_1 \text{ and } n_0 < n_1,\\
      m_0 \leq n_0 \text{ and } m_1 \leq n_1 \text{ and } m_0 < m_1,
    \end{cases}
  \end{equation*}
  for all $\scr B_{m_0,n_0}, \scr B_{m_1,n_1} \in \scr S_k\setminus\{\scr B_{0,k}\}$.
\end{enumerate}
Finally, we use the function $w$ and its properties as well as the properties of $\dilesslex$ to
define our linear ordering relation $\drless$ on the dyadic rectangles $\drec$.
If $I_0 \times J_0,\, I_1 \times J_1 \in \drec$ we set
\begin{equation*}
  (I_0 \times J_0) \drless (I_1 \times J_1) :\Leftrightarrow
  \begin{cases}
    w(\scr B_{\lev I_0,\lev J_0}) < w(\scr B_{\lev I_1,\lev J_1}) \text{ or}\\
    w(\scr B_{\lev I_0,\lev J_0}) = w(\scr B_{\lev I_1,\lev J_1}) \text{ and } (I_0,J_0)\dilesslex (I_1,J_1).
  \end{cases}
\end{equation*}
Since our ordering relation $\drless$ is linear, we may well define the bijective index function
$\drindex\, :\, \drec \rightarrow \bb N$ by the following property:
\begin{equation*}
  \drindex(R_0) < \drindex(R_1)
  \Leftrightarrow R_0 \drless R_1,
  \qquad R_0,R_1 \in \drec.
\end{equation*}
Observe that the crucial relations between the geometry of a dyadic rectangle and its position
within our linear ordering~\eqref{eq:ordering-1} and~\eqref{eq:ordering-2} are satisfied by design.
\begin{figure}[bt]
  \begin{center}

      

    \includegraphics{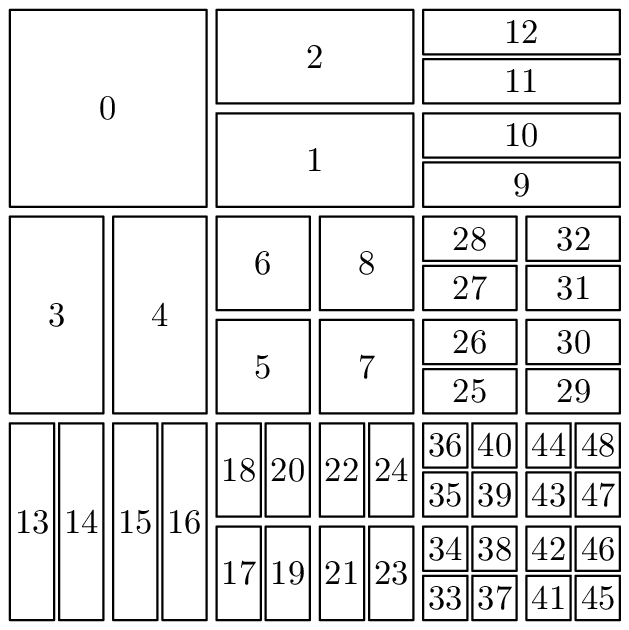}
  \end{center}
  \caption{Index function $\drindex(I\times J)$ for all $I\times J \in \drec_2$.}
  \label{fig:ordering-relation}
\end{figure}

\subsection{Combinatorial lemma}\label{ss:combinatorial}\hfill\\

\noindent
Let $\{r_i\}$ denote the sequence of independent Rademacher functions which are given by
\begin{equation*}
  r_i(t) = \sign(\sin(2\pi 2^i t)),
  \qquad t \in [0,1],\, i \in \bb N.
\end{equation*}
We consider the tensor product $r_{i,j}$ of the standard Rademacher system defined as
\begin{equation*}
  r_{ij}(s,t) = r_i(s)\, r_j(t),
  \qquad (s,t) \in [0,1]^2
\end{equation*}
It is well known and easy to verify that in both spaces, $\hardys$ and $\bmos$, the system
$\{r_{ij}\}$ is equivalent to the unit vector basis of $\ell^2$.
Specifically, there exists constants $c_0$, $C_0$ so that for any sequence of scalars
$\{a_{ij}\}$ the following inequalities hold.
\begin{equation*}
  \| \sum a_{ij} r_{ij} \|_{\hardys}^2 = \sum a_{ij}^2
\end{equation*}
and
\begin{equation*}
  c_0 \sum a_{ij}^2  \leq \| \sum a_{ij} r_{ij} \|_{\bmos}^2 \leq  C_0 \sum a_{ij}^2.
\end{equation*}
Hence, $\{r_{ij}\}$ is a weak null sequence in both spaces $\hardys$ and $\bmos$,
\begin{align*}
  r_{ij} \rightarrow 0
  \qquad &\text{weakly in $\hardys$, if $i \rightarrow \infty$ or $j \rightarrow \infty$}\\
  \intertext{and}
  r_{ij} \rightarrow 0
  \qquad &\text{weakly in $\bmos$, if $i \rightarrow \infty$ or $j \rightarrow \infty$}.
\end{align*}
For the purpose of our present work we need a quantitative strengthening of these considerations.
This is done in the following combinatorial lemma.
Our combinatorial argument is controlled by the local frequency weight
\begin{equation*}
  f(K\times L) = |\langle x, h_{K\times L} \rangle| + |\langle y, h_{K\times L} \rangle|,
  \qquad K\times L \subset K_0\times L_0
\end{equation*}
where $x \in \bmos$ and $y\in \hardys$ are fixed functions and $K_0\times L_0 \in \drec$.
For us, it will be extremely important that the collection
\begin{equation*}
  \{K\times L\, :\, f(K\times L) \leq \tau |K\times L|\}
\end{equation*}
contains almost complete and well--structured coverings of $K_0\times L_0$ of the form
\begin{equation*}
  \{K_0 \times L\, :\, L \in \dint_{\ell} \isect L_0\}
  \qquad\text{and}\qquad
  \{K \times L_0\, :\, K \in \dint_k \isect K_0\},
\end{equation*}
with $k$ and $\ell$ well under control in terms of $\tau$.
See Figure~\ref{fig:combinatorial-2}.

\begin{lem}\label{lem:comb-1}
  Let $i \in \bb N$, $K_0, L_0 \in \dint$, $x_j \in \bmos$, $y_j \in \hardys$, $1 \leq j \leq i$,
  such that
  \begin{equation}\label{eq:function-hypothesis}
    \sum_{j=1}^i \|x_j\|_{\bmos} \leq 1
    \qquad\text{and}\qquad
    \sum_{j=1}^i \|y_j\|_{\hardys} \leq |K_0\times L_0|.
  \end{equation}
  Let $\tau > 0$, $r \in \bb N_0$, $K\times L\in \drec$ and define the local frequency weight
  \begin{equation}\label{eq:local_frequency_weight}
    f_i(K\times L)
    = \sum_{j=1}^i |\langle x_j, h_{K\times L} \rangle| + |\langle y_j, h_{K\times L}\rangle|
  \end{equation}
  as well as the collections
  \begin{align*}
    \scr K(K_0\times L_0) &= \big\{ K\times L_0\, :\, K \subset K_0,\ |K|\leq 2^{-r}|K_0|,\
      f_i(K\times L_0)
      \leq \tau |K\times L_0|
    \big\},\\
    \scr L(K_0\times L_0) &= \big\{ K_0\times L\, :\, L \subset L_0,\ |L|\leq 2^{-r}|L_0|,\
      f_i(K_0\times L)
      \leq \tau |K_0\times L|
    \big\}.
  \end{align*}
  For all integers $k,\ell$ the collections $\scr K_k(K_0\times L_0)$ and
  $\scr L_\ell(K_0\times L_0)$ are given by
  \begin{align*}
    \scr K_k(K_0\times L_0)
    &= \scr K(K_0\times L_0) \isect (\{K \in \dint\, :\, |K| = 2^{-k}|K_0|\}\times \dint),\\
    \scr L_\ell(K_0\times L_0)
    & = \scr L(K_0\times L_0) \isect (\dint\times \{L \in \dint\, :\, |L| = 2^{-\ell}|L_0|\}).
  \end{align*}
  Let $\delta > 0$.
  Then there exist integers $k,\ell$ with
  \begin{equation}\label{lem:comb-1:int}
    r \leq k,\ell \leq \lfloor \frac{i^2}{\delta^2\tau^2} \rfloor + r
  \end{equation}
  such that
  \begin{equation}\label{lem:comb-1:measure}
    | \scr K_k^*(K_0\times L_0) | \geq (1-\delta) |K_0\times L_0|
    \quad\text{and}\quad
    | \scr L_\ell^*(K_0\times L_0) | \geq (1-\delta) |K_0\times L_0|.
  \end{equation}
\end{lem}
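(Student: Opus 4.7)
The two halves of~\eqref{lem:comb-1:measure} are symmetric under exchanging the horizontal and vertical coordinates, so I focus on producing $k$ and obtain $\ell$ by running the same argument on rows. The plan is a double-counting contradiction. Set $M=\lfloor i^2/(\delta^2\tau^2)\rfloor$ and suppose the conclusion fails, so that at every level $k\in\{r,\ldots,r+M\}$ the complementary ``bad'' collection
$$
  \scr K_k^{\mathrm{bad}} = \{K\times L_0\,:\, K\subset K_0,\ |K|=2^{-k}|K_0|,\ f_i(K\times L_0)>\tau|K\times L_0|\}
$$
satisfies $|\scr K_k^{\mathrm{bad},*}|>\delta|K_0\times L_0|$. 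Summing over the $M+1$ offending levels gives the lower bound $\sum_k|\scr K_k^{\mathrm{bad},*}|>\delta(M+1)|K_0\times L_0|$, which I intend to contradict by upper-bounding the same quantity through the normalizations in~\eqref{eq:function-hypothesis}.

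For the upper bound I would split each bad rectangle by which of the $2i$ summands in~\eqref{eq:local_frequency_weight} is responsible: $f_i(K\times L_0)>\tau|K\times L_0|$ forces at least one of $|\langle x_j,h_{K\times L_0}\rangle|$ or $|\langle y_j,h_{K\times L_0}\rangle|$ to exceed $\tau|K\times L_0|/(2i)$. The $\bmos$ part is handled by Chebyshev together with the $\ell^2$-estimate built into~\eqref{eq:definition_bmos_norm} applied with $\Omega=K_0\times L_0$: this yields that the total measure of $x_j$-driven bad rectangles across all $M+1$ levels is at most $(2i/\tau)^2\|x_j\|_{\bmos}^2|K_0\times L_0|$. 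The $\hardys$ part admits no direct $\ell^2$-estimate, so instead I would reduce by Markov to bounding $\sum_{k,K}|\langle y_j,h_{K\times L_0}\rangle|$, absorb the signs into a test function $z=\sum\pm h_{K\times L_0}$ supported on the range of scales, and apply $\hardys$--$\bmos$ duality: the Carleson sum defining $\|z\|_{\bmos}$ sees at most $M+1$ scales of slices $K\times L_0$ inside any open $\Omega$, giving $\|z\|_{\bmos}\lesssim\sqrt{M+1}$, and hence the $y_j$-driven bad rectangles contribute at most $(2i/\tau)\|y_j\|_{\hardys}\sqrt{M+1}$.

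Summing these per-$j$ bounds and substituting~\eqref{eq:function-hypothesis} yields an upper bound of the form $C\bigl(i^2/\tau^2+(i/\tau)\sqrt{M+1}\bigr)|K_0\times L_0|$, which contradicts $\delta(M+1)|K_0\times L_0|$ once $M$ has the prescribed size $\sim i^2/(\delta\tau)^2$; the symmetric argument exchanging $K$ and $L$ then produces $\ell$, and the resulting levels automatically lie in the range~\eqref{lem:comb-1:int}. I expect the hard part to be the $\hardys$-half: there is no analogue of the $\bmos$ $\ell^2$-sum estimate for partial Haar coefficient sums of an $\hardys$-function (the candidate $\ell^2$-sum is $\|y_j\|_{L^2}^2$, which is not controlled by $\|y_j\|_{\hardys}$), which forces the detour through $\bmos$-duality, and whose $\sqrt{M+1}$ loss is precisely what dictates the quadratic scaling $M\sim i^2/(\delta\tau)^2$ appearing in~\eqref{lem:comb-1:int}.
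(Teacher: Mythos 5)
Your plan is sound and would yield a version of the lemma, but it takes a genuinely different route than the paper's proof for the $\bmos$ half. The paper handles $x_j$ and $y_j$ symmetrically: after summing the contradiction hypothesis over the $A-r+1$ offending levels, it directly bounds $\tau\sum_k|\scr B_k^*|$ from above by inserting the definition of $f_i$, absorbing signs into a $j$--dependent test function $z_j=\sum_{k,K}\pm h_{K\times L_0}$, and applying $\hardys$--$\bmos$ duality on both sides simultaneously, using the Rademacher-type estimate~\eqref{eq:sum_of_rademacher-estimate} which gives $\|z_j\|_{\hardys}\le\sqrt{A-r+1}\,|K_0\times L_0|$ \emph{and} $\|z_j\|_{\bmos}\le\sqrt{A-r+1}$. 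One duality step disposes of both $\sum_j|\langle x_j,z_j\rangle|$ and $\sum_j|\langle y_j,z_j\rangle|$ via~\eqref{eq:function-hypothesis}, with no pigeonholing. By contrast you pigeonhole each bad rectangle by which of the $2i$ summands exceeds $\tau|K\times L_0|/(2i)$, treat the $x_j$-driven rectangles by Chebyshev on the Carleson sum in~\eqref{eq:definition_bmos_norm}, and fall back on duality only for the $y_j$-driven ones. Both routes work, and your Chebyshev treatment of $x_j$ avoids the $\sqrt{M+1}$ factor on that side; however the $(2i)$-threshold squares to a $(2i)^2$ in the $x_j$ bound and contributes a $2i$ to the $y_j$ bound, so the contradiction kicks in only around $M\sim 4i^2/(\delta\tau)^2$ rather than at $\lfloor i^2/(\delta^2\tau^2)\rfloor$ exactly; your closing claim that the levels ``automatically lie in the range~\eqref{lem:comb-1:int}'' therefore needs a constant-factor adjustment (which is harmless for the paper's applications, since $\tau$ and $\delta$ are chosen later). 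One remark on your closing intuition: you attribute the $\sqrt{M+1}$ loss, and hence the quadratic scaling in~\eqref{lem:comb-1:int}, specifically to the $\hardys$ side and suggest the $\bmos$ side is easy. The paper's proof shows this asymmetry is illusory: it accepts a $\sqrt{M+1}$ factor on the $\bmos$ side as well (via duality against $\|z_j\|_{\hardys}$), and the quadratic scaling comes out all the same -- the symmetric duality argument is in fact shorter and cleaner than juggling Chebyshev on one side and duality on the other.
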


\begin{proof}
  Define $\scr B = \{K\times L_0\, :\, K\subset K_0\} \setminus \scr K(K_0\times L_0)$ and
  \begin{equation*}
    \scr B_k = \scr B \isect (\{K \in \dint\, :\, |K| = 2^{-k}|K_0|\}\times \dint),
  \end{equation*}
  see Figure~\ref{fig:combinatorial-1}.
  \begin{figure}[bt]
    \begin{center}



      \includegraphics{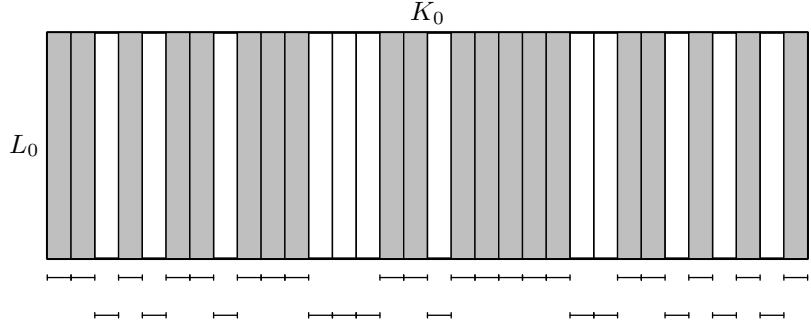}
    \end{center}
    \caption{The shaded rectangles form $\scr K_k(K_0\times L_0)$, the white rectangles form
      $\scr B_k$.
      The $x$--component of $\scr K_k(K_0\times L_0)$ (first level of intervals) and the
      $x$--component of $\scr B_k$ (second level of intervals) form a disjoint cover  of $K_0$.}
    \label{fig:combinatorial-1}
  \end{figure}
  Let
  \begin{equation*}
    A = \lfloor \frac{i^2}{\delta^2\tau^2} \rfloor + r.
  \end{equation*}
  By construction $\scr B_k$ and $\scr K_k(K_0\times L_0)$ form a disjoint decomposition of
  $K_0\times L_0$.
  We will determine a collection $\scr K_k(K_0\times L_0)$ by showing that $\scr B_k^*$ is small
  enough for at least one value of $k$.
  Now assume the opposite, namely that
  \begin{equation*}
    | \scr B_k^* | \geq \delta |K_0\times L_0|,
    \qquad r \leq k \leq A.
  \end{equation*}
  Summing these estimates yields
  \begin{equation}\label{lem:comb-1:1}
    \sum_{k=r}^A | \scr B_k^* | \geq (A-r+1)\, \delta\, |K_0\times L_0|,
  \end{equation}
  Observe that
  \begin{align*}
    \tau\cdot \sum_{k=r}^A |\scr B_k^*|
    &\leq \sum_{j=1}^i \sum_{k=r}^A \sum_{K\times L_0 \in \scr B_k}
      |\langle x_j, h_{K\times L_0} \rangle| + |\langle y_j, h_{K\times L_0}\rangle|\\
    &= \sum_{j=1}^i \big|\big\langle
        x_j, \sum_{k=r}^A \sum_{K\times L_0 \in \scr B_k} \pm h_{K\times L_0}
      \big\rangle\big|
      + \big|\big\langle
        y_j,\sum_{k=r}^A \sum_{K\times L_0 \in \scr B_k} \pm h_{K\times L_0}
      \big\rangle\big|.
  \end{align*}
  By~\eqref{eq:sum_of_rademacher-estimate} we have
  \begin{align*}
    \Big\| \sum_{k=r}^A \sum_{K\times L_0 \in \scr B_k} \pm h_{K\times L_0} \Big\|_{\hardys}
    & = \sqrt{A-r+1}\ |K_0\times L_0|,\\
    \Big\| \sum_{k=r}^A \sum_{K\times L_0 \in \scr B_k} \pm h_{K\times L_0} \Big\|_{\bmos}
    & = \sqrt{A-r+1},
  \end{align*}
  thus, by duality and~\eqref{eq:function-hypothesis} we obtain
  \begin{equation}\label{lem:comb-1:2}
    \tau\cdot \sum_{k=r}^A |\scr B_k^*|
    \leq i\, \sqrt{A-r+1}\ |K_0\times L_0|.
  \end{equation}
  Combining~\eqref{lem:comb-1:1} and~\eqref{lem:comb-1:2} we conclude
  \begin{equation*}
    A \leq \frac{i^2}{\delta^2 \tau^2} + r -1,
  \end{equation*}
  which contradicts the definition of $A$.
  Thus we found $r\leq k\leq A$ so that
  \begin{equation*}
    | \scr K_k^*(K_0\times L_0) | \geq (1-\delta) |K_0\times L_0|,
  \end{equation*}
  see Figure~\ref{fig:combinatorial-1}.
  We emphasize that the $x$--component of the rectangles in $\scr K_k^*(K_0\times L_0)$ are covering
  a set of measure $\geq (1-\delta) |K_0|$ in $K_0$.
  The $y$--component of the rectangles in $\scr K_k^*(K_0\times L_0)$ equals $L_0$ throughout.
  (For the later use of this lemma it is extremely important that we found a large collection of
  rectangles $\scr K_k(K_0\times L_0)$ where the $y$--component $L_0$ remains intact.)
  See Figure~\ref{fig:combinatorial-2}.
  \begin{figure}[bt]
    \begin{center}
      \includegraphics{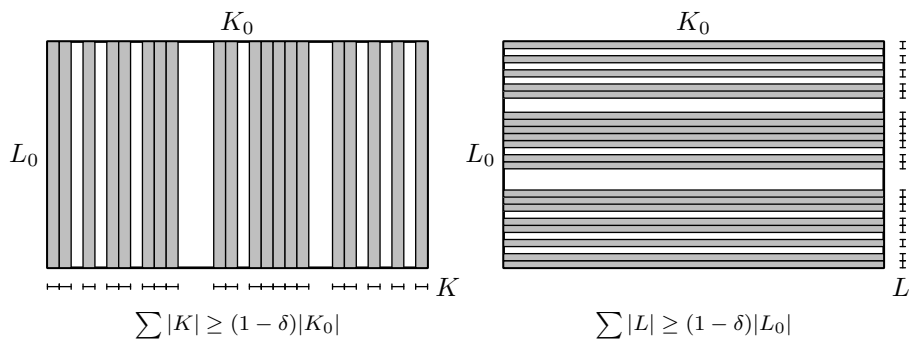}
    \end{center}
    \caption{Two good covers.
      The shaded rectangles in the left picture are from $\scr K_k(K_0\times L_0)$.
      For those rectangles the $y$--component $L_0$ remains intact and the $x$--components $K$ form a
      large cover of $K_0$.
      \textbf{Therefore} the construction in Section~\ref{ss:quasi-diag} yields the crucial measure
      estimate~\eqref{eq:thm:quasi-diag:13}.
      The right picture displays the collection $\scr L_\ell(K_0\times L_0)$ and the roles of $x$ and
      $y$--components are interchanged.}
    \label{fig:combinatorial-2}
  \end{figure}

  The same proof in the other variable can be used to show the estimate for $\scr L_\ell^*$.
\end{proof}

\begin{figure}[bt]
  \begin{center}
    \includegraphics{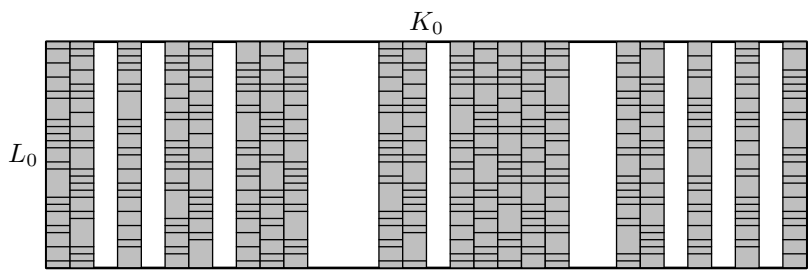}
\end{center}
\caption{A bad cover of $K_0\times L_0$.
  These fragmented shaded rectangles cover the same subset of $K_0\times L_0$ as
  $\scr K_k(K_0\times L_0)$ (see Figure~\ref{fig:combinatorial-2}).
  The $y$--component $L_0$ did not remain intact and \textbf{therefore} the construction in
  Section~\ref{ss:quasi-diag} would not yields the crucial measure
  estimate~\eqref{eq:thm:quasi-diag:13}.}
\label{fig:combinatorial-3}
\end{figure}

\subsection{Proof of Theorem~\ref{thm:quasi-diag}}\label{ss:quasi-diag}\hfill\\

\noindent
Theorem~\ref{thm:quasi-diag} asserts that we are able to construct a large block basis
$\{b_{I\times J}\}$ in $\hardys[N]$ which are almost eigenvectors for $T$.
Moreover, the block basis is such that it spans a well complemented copy of $\hardys[n]$ in
$\hardys[N]$.
We choose the normalization $M = 1$ and $\|T\|\leq 1$.

It is here where we will exploit our linear order $\drless$ introduced on the collection of dyadic
rectangles $\drec$.
The proof described below is by mathematical induction executed along the linear order given
by $\drindex$.

\subsection*{Inductive construction}\hfill\\

\noindent
To make the transition from standard indexing by dyadic rectangles to indexing by natural numbers we
employ the following convention.
Given a dyadic rectangle $I\times J$ with $\drindex(I \times J) = i$ we will systematically relabel
the collections $\scr E_{I\times J}$, the functions $b_{I\times J}$ and the constants
$\delta_{I\times J}$, $\tau_{I\times J}$ by $\scr E_i$, $b_i$ and $\delta_i$, $\tau_i$,
respectively.

Before we begin with our construction we explicitly define the constants $\delta_i$ by
\begin{equation}\label{eq:constant_definition}
  \delta_i = 2^{-i}/(8 n).
\end{equation}
The remaining crucial constants $\tau_i$ will be defined inductively as the construction proceeds.

\subsubsection*{First stage of the induction}
We begin the induction by setting $\scr E_1 := \scr E_{[0,1]\times [0,1]} :=\{[0,1]\times [0,1]\}$
and $b_1 := b_{[0,1]\times [0,1]} := h_{[0,1]\times [0,1]}$.

\subsubsection*{At stage $i$ of the induction}
We assume that we have already defined the disjoint collections of dyadic rectangles $\scr E_j$ for
all $1 \leq j \leq i-1$.
Now, we will construct $\scr E_i$.
The construction of $\scr E_i$ depends crucially on the value of $i$.
We will distinguish between two principal cases, where the second one is divided again into two
sub cases.
\begin{enumerate}[$\triangleright$]
\item Case~1:
  The stage ordinal $i$ is given by $i = \drindex([0,1]\times J)$.
\item Case~2:
  The stage ordinal $i$ is given by $i = \drindex(I\times J)$, where $I\neq [0,1]$.
  \begin{enumerate}[$+$]
    \item Case~2.a: The second component $J$ satisfies $J = [0,1]$.
    \item Case~2.b: The second component $J$ satisfies $J \neq [0,1]$.
  \end{enumerate}
\end{enumerate}

\noindent
\begin{minipage}[H]{.7\textwidth}
  \textbf{Case~1: $\bm{I = [0,1]}$.}
  The stage ordinal $i$ is given by $i = \drindex([0,1] \times J)$.
  Case~1 is applicable to the light rectangles.
  The collections $\scr E_{I_0\times J_0}$ indexed by the dark rectangles $I_0\times J_0$ are
  already well defined at this stage.
  The white ones are ignored.
\end{minipage}
\begin{minipage}[H]{.3\textwidth}
  \begin{center}


      

    \includegraphics{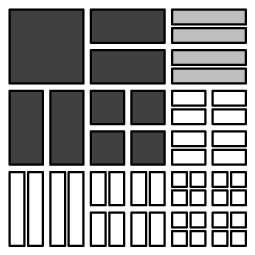}
  \end{center}
\end{minipage}
Recall that
\begin{equation*}
  b_j = \sum_{K\times L \in \scr E_j} h_{K\times L},
  \qquad 1 \leq j \leq i-1.
\end{equation*}
Since the collection $\scr E_j$ consists of pairwise disjoint rectangles
we have by~\eqref{eq:definition_hardys_norm} and duality that
\begin{equation*}
  \|b_j\|_{\bmos} = 1
  \qquad\text{and}\qquad
  \|b_j\|_{\hardys} = |\scr E_j^*|.
\end{equation*}
Let $\widetilde J$ denote the unique dyadic interval satisfying $\widetilde J \supset J$ and
$|\widetilde J| = 2|J|$.
By definition of our linear ordering we have $\drindex([0,1]\times \widetilde J) \leq i-1$.
Hence, $\scr E_{[0,1]\times \widetilde J}$ is already defined.
Now put
\begin{equation}\label{eq:proof:step-1:building_block_size}
  \beta_i = \min\{|K_0\times L_0|\, :\, K_0\times L_0\in \scr E_{[0,1]\times \widetilde J}\}
\end{equation}
and define for all $1 \leq j \leq i-1$
\begin{align}
  x_j &:= \frac{1}{i-1} T^*b_j,
  &y_j &:= \frac{\beta_i}{(i-1) |\scr E_j^*|} T b_j,
  \label{eq:proof:step-1:functions}
\end{align}
Recall that we are using the normalization $\|T\| \leq 1$, hence
\begin{equation*}
  \sum_{j=1}^{i-1} \|x_j\|_{\bmos} \leq 1
  \qquad\text{and}\qquad
  \sum_{j=1}^{i-1} \|y_j\|_{\hardys} \leq \beta_i.
\end{equation*}
We define the local frequency weight
\begin{equation}\label{eq:local_frequency_weight:1}
  f_{i-1}(K\times L)
  = \sum_{j=1}^{i-1} |\langle x_j, h_{K\times L} \rangle| + |\langle y_j, h_{K\times L}\rangle|,
  \qquad K\times L\in \drec.
\end{equation}
Given $L_0$ we remark that by our previous choices we have the following convenient implication:
\begin{equation}\label{eq:proof:step-1:first-coordinate}
  K\times L_0 \in \scr E_{[0,1]\times \widetilde J}
  \quad\text{implies}\quad K = [0,1].
\end{equation}
We now define the constant $\tau_i$ by
\begin{equation}\label{eq:proof:step-1:constant_definition}
  \tau_i
  = \frac{2^{-i}}{4(i-1)} \beta_i \min_{j\leq i} \varepsilon_j \|b_j\|_2^2
\end{equation}
For all $L_0$ such that $[0,1]\times L_0 \in \scr E_{[0,1]\times \widetilde J}$, we define the
collection of dyadic rectangles
\begin{equation*}
  \scr L([0,1]\times L_0) = \big\{ [0,1] \times L\, :\, L \subsetneq L_0,\
  f_{i-1}([0,1]\times L) \leq \tau_i |L|
  \big\}.
\end{equation*}
Applying Lemma~\ref{lem:comb-1} to $\scr L([0,1]\times L_0)$ yields an integer
$\ell = \ell([0,1]\times L_0)$ so that
\begin{equation}\label{eq:step-1:integer-bound}
  1 \leq \ell([0,1]\times L_0) < \frac{(i-1)^2}{\delta_i^2\, \tau_i^2} +1
\end{equation}
such that the collection of disjoint dyadic rectangles
\begin{equation*}
  \scr Z_{[0,1]\times J}([0,1]\times L_0)
  = \big\{ [0,1]\times L\in \scr L([0,1]\times L_0)\, :\, |L|=2^{-\ell([0,1]\times L_0)}|L_0| \big\}
\end{equation*}
satisfies the estimate
\begin{equation}\label{eq:step-1:local-measure-estimate}
  (1-\delta_i) |[0,1] \times L_0|
  \leq |\scr Z_{[0,1]\times J}^*([0,1]\times L_0)|
  \leq |[0,1] \times L_0|.
\end{equation}
Note that in Lemma~\ref{lem:comb-1} $\scr Z_{[0,1]\times J}([0,1]\times L_0)$ was denoted
$\scr L_\ell([0,1]\times L_0)$.
Now we take the union and define
\begin{equation*}
  \scr Z_{[0,1]\times J} = \Union\big\{ \scr Z_{[0,1]\times J}([0,1]\times L_0)\, :\,
  [0,1]\times L_0\in \scr E_{[0,1]\times \widetilde J}
  \big\}.
\end{equation*}
Since $\scr Z_{[0,1]\times J}([0,1]\times L_0) \subset \scr L([0,1]\times L_0)$, we know
\begin{equation}\label{eq:step-1:quasi-diag-estimate}
  f_{i-1}([0,1]\times L)
  \leq \tau_i |L|,
  \qquad \text{for $[0,1]\times L \in \scr Z_{[0,1]\times J}$}.
\end{equation}
\begin{figure}[bt]
  \begin{center}
    \includegraphics{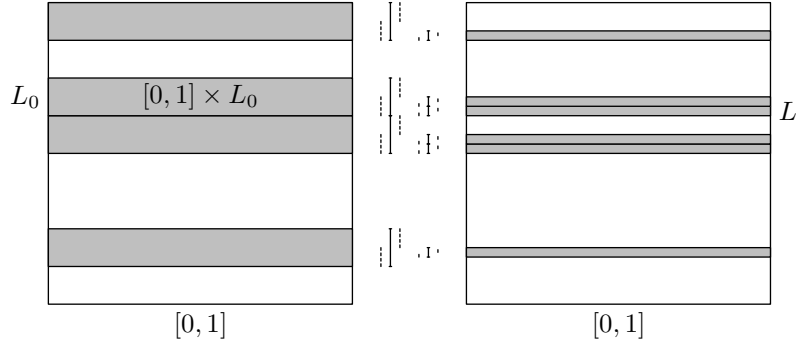}
  \end{center}
  \caption{This figure displays the transition from $\scr E_{[0,1]\times \widetilde J}$ to
    $\scr E_{[0,1]\times J}$ by means of a generic Gamlen-Gaudet
    step~\eqref{eq:step-1:block-basis-collection}.
    The shaded rectangles $[0,1]\times L_0$ on the left form $\scr E_{[0,1]\times \widetilde J}$,
    the shaded rectangles $[0,1]\times L$ on the right form $\scr E_{[0,1]\times J}$.
    The union of the rectangles in $\scr E_{[0,1]\times J}$ is contained in
    $E_{[0,1]\times \widetilde J}^\ell$, since in this figure $J$ is the left half of $\widetilde J$.
    The center displays the $y$--components of $b_{[0,1]\times \widetilde J}$ (center left) and
    of $b_{[0,1]\times J}$ (center right).
  }
  \label{fig:gg-1}
\end{figure}
We are now ready to define $\scr E_{[0,1]\times J}$ using the Gamlen-Gaudet procedure.
To this end recall first that $\widetilde J$ denotes the unique dyadic interval satisfying
$\widetilde J \supset J$ and $|\widetilde J| = 2|J|$.
For a dyadic interval $L_0$ we denote its left half by
$L_0^\ell$ ($L_0^\ell \subset L_0$, $|L_0^\ell| = |L_0|/2$, $\inf L_0^\ell = \inf L_0$)
and its right half by $L_0^r$ ($L_0^r = L_0 \setminus L_0^\ell$).
We define the sets
\begin{equation*}
  E_{[0,1]\times \widetilde J}^\ell
  = \Union_{[0,1]\times L_0 \in \scr E_{[0,1]\times \widetilde J}} [0,1]\times L_0^\ell
  \quad\text{and}\quad
  E_{[0,1]\times \widetilde J}^r
  = \Union_{[0,1]\times L_0 \in \scr E_{[0,1]\times \widetilde J}} [0,1]\times L_0^r.
\end{equation*}
If $J$ is the left half of $\widetilde J$, then we put
\begin{subequations}\label{eq:step-1:block-basis-collection}
  \begin{equation}\label{eq:step-1:block-basis-collection-a}
    \scr E_{[0,1] \times J} = \big\{ [0,1] \times L \in \scr Z_{[0,1]\times J}\, :\,
    [0,1] \times L\subset E_{[0,1]\times \widetilde J}^\ell
    \big\}.
  \end{equation}
  See Figure~\ref{fig:gg-1}.
  In the other case when $J$ is the right half of $\widetilde J$ we put accordingly
  \begin{equation}\label{eq:step-1:block-basis-collection-b}
    \scr E_{[0,1] \times J} = \big\{ [0,1] \times L \in \scr Z_{[0,1]\times J}\, :\,
    [0,1] \times L\subset E_{[0,1]\times \widetilde J}^r
    \big\}.
  \end{equation}
\end{subequations}

Recall that $i = \drindex([0,1]\times J)$ and $\delta_i = \delta_{[0,1]\times J}$.
An immediate consequence of the Gamlen-Gaudet construction
and~\eqref{eq:step-1:local-measure-estimate} is the estimate
\begin{equation}\label{eq:step-1:global-measure-estimate}
  \frac{1}{2} (1 - \delta_{[0,1]\times J}) |[0,1]\times L|
  \leq |([0,1]\times L) \isect \scr E_{[0,1]\times J}^*|
  \leq \frac{1}{2} |[0,1]\times L|,
\end{equation}
for all $[0,1]\times L \in \scr E_{[0,1]\times \widetilde J}$.
Note that all the rectangles in $\scr E_{[0,1]\times \widetilde J}$ are of the form $[0,1]\times L$,
see~\eqref{eq:proof:step-1:first-coordinate}.


\noindent
\begin{minipage}[H]{.7\textwidth}
  \textbf{Case~2: $\bm{I \neq [0,1]}$.}
  The figure on the right depicts the transition from Case~1 to Case~2.
  Here, the stage ordinal $i$ is given by $i = \drindex(I\times J)$ with $I \neq [0,1]$.
  The rectangle $I\times J$ is one of the light rectangles.
  The light rectangles fall into two separate cases, see below.
  Up to~\eqref{eq:step-2:block-basis-collection} both cases are treated in tandem.
\end{minipage}
\begin{minipage}[H]{.3\textwidth}
  \begin{center}


      

    \includegraphics{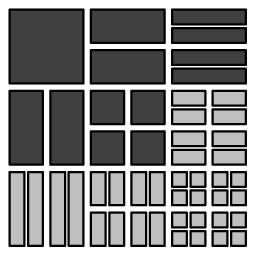}
  \end{center}
\end{minipage}

\ \\
\noindent
We will now construct the collections $\scr Y_{I\times J}$ of $y$--frequencies
and depending on each $y$--frequency $L_0 \in \scr Y_{I\times J}$ the collection
$\scr X_{I\times J}(L_0)$ of $x$--frequencies.
These frequencies will be our building blocks for $\scr E_{I\times J}$ and we have
\begin{equation*}
  \scr E_{I\times J}
  \subset \Union \{\scr X_{I_\times J}(L_0)\times \{L_0\}\, :\, L_0\in \scr Y_{I\times J} \}.
\end{equation*}
The rules by which we finally select $\scr E_{I\times J}$ from the above large union are given
in the equations~\eqref{eq:step-2:block-basis-collection}.

First, let us define the collection $\scr Y_{I\times J}$ simply by putting
\begin{equation*}
  \scr Y_{I\times J} = \{L_0\, :\, [0,1]\times L_0 \in \scr E_{[0,1]\times J}\}.
\end{equation*}
We remark that $K\times L_0 \in \scr E_{[0,1]\times J}$ implies $K = [0,1]$.
Fix $L_0 \in \scr Y_{I\times J}$.
By means of the combinatorial Lemma~\ref{lem:comb-1} we will construct the collection
$\scr X_{I\times J}(L_0)$ of dyadic intervals so that
$\scr X_{I\times J}(L_0)\times \{L_0\}$ is an almost complete cover of $[0,1]\times L_0$ and
simultaneously the rectangles in $\scr X_{I\times J}(L_0)\times \{L_0\}$ have almost vanishing local
frequency weight~\eqref{eq:local_frequency_weight:1}.

Now, let $\scr P$ denote the previous dyadic rectangle indices that are not located
in the same macro block $\scr B_{m,n}$ as $I\times J$, see~\eqref{eq:linear_order-macroblock}.
That is
\begin{equation*}
  \scr P
  = \big\{ I_0\times J_0\, :\,
  I_0\times J_0 \drless I\times J,\, (|I_0|,|J_0|)\neq (|I|,|J|),\, |I_0|\geq |I|,\, |J_0|\geq |J|
  \big\},
\end{equation*}
see Figure~\ref{fig:combinatorial-4}.
\begin{figure}[bt]
  \begin{minipage}[H]{\textwidth}
    \begin{center}









      \includegraphics{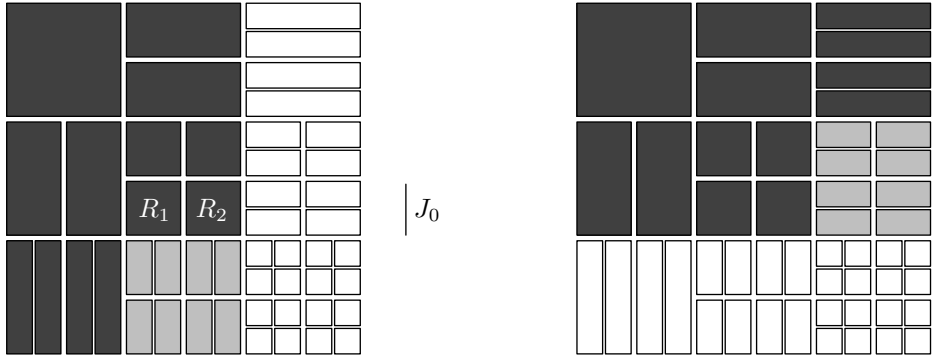}
    \end{center}
    \caption{If $I\times  J$ is one of the shaded rectangles then $\scr P$ is the collection of the
      black rectangles for two generic situations.
      If $L_0'$ is defined by~\eqref{eq:y--low-frequency} then 
      $L_0'\isect L_0 \neq \emptyset$ and $L_0'\subsetneq L_0$ is excluded by choice of $\scr P$.
      The interval $J_0$ defines the strip $\dint_1\times \{J_0\} = \{R_1,R_2\}$ in $\mathbb A$,
      see~\eqref{eq:index_strip}.
    }
    \label{fig:combinatorial-4}
  \end{minipage}
\end{figure}
For each $I_0\times J_0\in \scr P$ there exists a unique $L_0'\in \scr Y_{I_0\times J_0}$ such that
$L_0'\isect L_0 \neq \emptyset$ \textbf{in which case} $L_0' \supset L_0$, 
see Figure~\ref{fig:combinatorial-4}.
We display the logical dependence by writing
\begin{equation}\label{eq:y--low-frequency}
L_0' = L_0'(I\times J, L_0, I_0\times J_0).
\end{equation}

Next, we further partition $\scr P$ into strip collections.
Recall that $\dint_m = \{ I\in \dint\, :\, |I|=2^{-m}\}$.
We define $\mathbb A$ by the following rule:
if $I_0\times J_0\in \scr P$ and $|I_0| = 2^{-m}$ we put $\dint_m\times \{J_0\}\in \mathbb A$.
In other words
\begin{equation}\label{eq:index_strip}
  \mathbb A = \big\{ \dint_m\times \{J_0\}\, :\,
  m\in \bb N_0,\, I_0\times J_0\in \scr P,\,  I_0 \in \dint_m
  \big\}.
\end{equation}
See Figure~\ref{fig:combinatorial-4}.
Note that if $\scr S\in \mathbb A$ then there exist $m$ and $J_0\in \dint$ such that
$\scr S = \dint_m\times \{J_0\}$ and for each $I_0'\in \dint_m$ we have
$I_0'\times J_0 \in \scr P$.
We have clearly $\scr P = \Union_{\scr S\in \mathbb A} \scr S$.

After the preperatory step above we now turn to the core construction, which uses
$\scr X_{I_0\times J_0}(L_0')$, $I_0\times J_0\in \scr P$ as input and returns the collection
$\scr X_{I\times J}(L_0)$ as output.
We extract the relevant information carried by the input collection by defining the following sets:
\begin{equation}\label{eq:x-supports}
  W_{I\times J}(\scr S, L_0)
  = \Union \big\{ \scr X_{I_0\times J_0}^*(L_0')\, :\, I_0\times J_0\in \scr S \big\},
  \qquad \scr S \in \mathbb A.
\end{equation}
(We emphasize the logical dependece $L_0' = L_0'(I\times J, L_0, I_0\times J_0)$.)
See Figure~\ref{fig:combinatorial-5}.
\begin{figure}[bt]
  \begin{minipage}[H]{\textwidth}
    \begin{center}
      \includegraphics{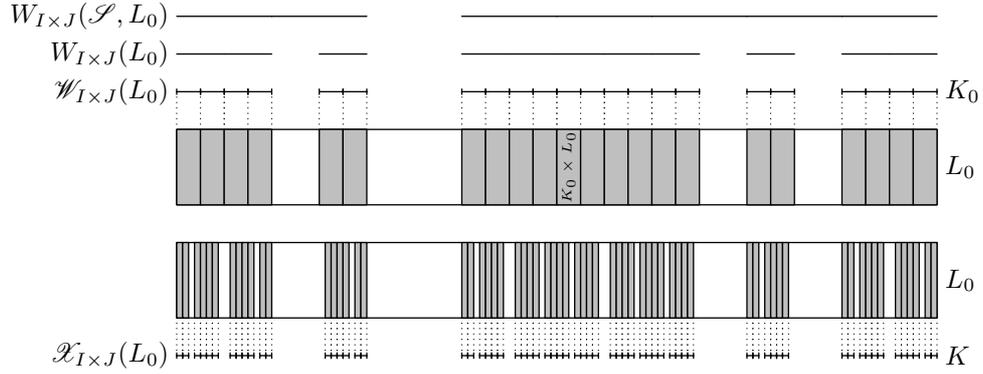}
    \end{center}
    \caption{From top to bottom the figure describes the transition from the input data
      $W_{I\times J}(\scr S,L_0)$ to their intersection $W_{I\times J}(L_0)$ to the fine covering
      $\scr W_{I\times J}(L_0)$ and the rectangles $\scr W_{I\times J}(L_0)\times \{L_0\}$.
      The refinement $\scr X_{I\times J}(L_0)\times \{L_0\}$ of
      $\scr W_{I\times J}(L_0)\times \{L_0\}$ results from Lemma~\ref{lem:comb-1}, hence
      $(K_0\isect \scr X_{I\times J}(L_0))\times \{L_0\}$ almost covers $K_0\times L_0$.}
    \label{fig:combinatorial-5}
  \end{minipage}
\end{figure}
Now we take the intersection over the strips $\scr S\in \mathbb A$
\begin{equation*}
  W_{I\times J}(L_0) = \Isect_{\scr S\in \mathbb A}
  W_{I\times J}(\scr S, L_0).
\end{equation*}
We next choose a fine covering of $W_{I\times J}(L_0)$ by intervals of equal length.
To this we put
\begin{equation*}
  \eta_i = \frac{1}{2} \min\{|K|\, :\,
  \exists L,\, K\times L \in \Union_{I_0\times J_0 \in \scr P} \scr E_{I_0\times J_0}
  \},
\end{equation*}
and set
\begin{equation*}
  \scr W_{I\times J}(L_0) = \{K \in \dint\, :\, |K| = \eta_i,\, K \subset W_{I\times J}(L_0) \}.
\end{equation*}
The collection of intervals $\scr W_{I\times J}(L_0)$ gives rise to the collection of pairwise
disjoint dyadic rectangles $\scr W_{I\times J}(L_0)\times \{L_0\}$, see
Figure~\ref{fig:combinatorial-5}.
By means of the combinatorial Lemma~\ref{lem:comb-1} we refine this collection of rectangles and
obtain a almost complete covering of $W_{I\times J}(L_0)\times \{L_0\}$ consisting of rectangles
$K\times L_0$ with almost vanishing local frequency weight $f_{i-1}$ specified below.
It is important that in the refined covering the $y$--component $L_0$ remains intact.

We defined previously that
\begin{equation*}
  b_j = \sum_{K\times L \in \scr E_j} h_{K\times L},
  \qquad 1 \leq j \leq i-1.
\end{equation*}
Since the collection $\scr E_j$ consists of pairwise disjoint rectangles
we have by~\eqref{eq:definition_hardys_norm} and duality that
\begin{equation*}
  \|b_j\|_{\bmos} = 1
  \qquad\text{and}\qquad
  \|b_j\|_{\hardys} = |\scr E_j^*|.
\end{equation*}
Now, put
\begin{equation}\label{eq:proof:step-2:building_block_size}
  \beta_i = \min\{|K_0\times L_0|\, :\, L_0\in \scr Y_{I\times J}, K_0\in \scr W_{I\times J}(L_0)\}
\end{equation}
and define for all $1 \leq j \leq i-1$
\begin{align}
  x_j &:= \frac{1}{i-1} T^*b_j,
  &y_j &:= \frac{\beta_i}{(i-1) |\scr E_j^*|} T b_j,
  \label{eq:proof:step-2:functions}
\end{align}
Recall that we are using the normalization $\|T\| \leq 1$, hence
\begin{equation*}
  \sum_{j=1}^{i-1} \|x_j\|_{\bmos} \leq 1
  \qquad\text{and}\qquad
  \sum_{j=1}^{i-1} \|y_j\|_{\hardys} \leq \beta_i.
\end{equation*}
We next define the local frequency weight
\begin{equation}\label{eq:local_frequency_weight:2}
  f_{i-1}(K\times L)
  = \sum_{j=1}^{i-1} |\langle x_j, h_{K\times L} \rangle| + |\langle y_j, h_{K\times L}\rangle|,
  \qquad K\times L\in \drec.
\end{equation}
We fix $K_0\in \scr W_{I\times J}(L_0)$ and let
\begin{equation*}
  \scr K(K_0\times L_0) = \{K\times L_0\, :\,
  K \subsetneq K_0,\,
  f_{i-1}(K\times L_0)
  \leq \tau_i |K\times L_0|
  \},
\end{equation*}
where the constant $\tau_i$ is given by
\begin{equation}\label{eq:proof:step-2:constant_definition}
  \tau_i
  = \frac{2^{-i}}{4(i-1)} \beta_i \min_{j\leq i} \varepsilon_j \|b_j\|_2^2.
\end{equation}
Applying Lemma~\ref{lem:comb-1} to $\scr K(K_0\times L_0)$ yields an integer
$k = k(K_0\times L_0)$ such that
\begin{equation}\label{eq:step-2:integer-bound}
  1 \leq k(K_0\times L_0) < \frac{(i-1)^2}{\delta_i^2\, \tau_i^2} + 1
\end{equation}
and so that
\begin{equation*}
  \scr Z_{I\times J}(K_0\times L_0)
  = \big\{ K\times L_0 \in \scr K(K_0\times L_0)\, :\, |K| = 2^{-k(K_0\times L_0)} |K_0| \big\}
\end{equation*}
satisfies
\begin{equation}\label{eq:step-2:local-measure-estimate}
  (1-\delta_i) |K_0 \times L_0|
  \leq |\scr Z_{I\times J}^*(K_0\times L_0)|
  \leq |K_0 \times L_0|.
\end{equation}
Finally, the result of our construction is thus
\begin{equation}\label{eq:step-2:local-x-frequency}
  \scr X_{I\times J}(L_0)
  = \{K\, :\, K\times L_0 \in \scr Z_{I\times J}(K_0\times L_0), K_0 \in \scr W_{I\times J}(L_0)
  \},
\end{equation}
and
\begin{equation}\label{eq:step-2:building-blocks}
  \scr Z_{I\times J} = \Union \big\{ \scr X_{I\times J}(L_0)\times \{L_0\}\, :\,
  L_0\in \scr Y_{I\times J}
  \big\}.
\end{equation}
Observe that the following identity holds:
\begin{equation*}
  \scr Z_{I\times J} = \Union \big\{ \scr Z_{I\times J}(K_0\times L_0)\, :\,
  L_0 \in \scr Y_{I\times J},\, K_0\in \scr W_{I\times J}(L_0)
  \big\}.
\end{equation*}
Since $\scr Z_{I\times J}(K_0 \times L_0) \subset \scr K(K_0\times L_0)$, we have the estimate
\begin{equation}\label{eq:step-2:quasi-diag-estimate}
  f_{i-1}(K\times L_0)
  \leq \tau_i |K\times L_0|,
  \qquad\text{for all $K\times L_0 \in \scr Z_{I\times J}$.}
\end{equation}

Up to this point, the construction for Case~2.a and
Case~2.b are identical.
Now is the time to distinguish between the cases $J = [0,1]$ and $J \neq [0,1]$.

\begin{subequations}\label{eq:step-2:block-basis-collection}
  \noindent
  \begin{minipage}[H]{.7\linewidth}
    \textbf{Case~2.a: $\bm{I \neq [0,1]}$, $\bm{J = [0,1]}$.}
    The light rectangles $I\times J$ are the ones to which Case~2.a is applicable.
    The collection $\scr E_{I\times J}$ is defined in~\eqref{eq:step-2:block-basis-collection-2.a}.
    The collections $\scr E_{I_0\times J_0}$ indexed by the dark rectangles $I_0\times J_0$ are
    already well defined.
    The white ones are ignored.
  \end{minipage}
  \begin{minipage}[H]{.25\linewidth}
  \begin{center}


      

    \includegraphics{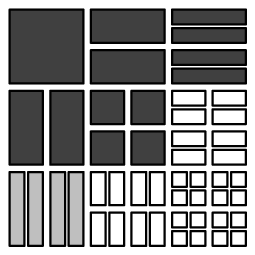}
  \end{center}
\end{minipage}

\noindent
We are now ready to define $\scr E_{I\times J}$ using the Gamlen-Gaudet procedure.
To this end recall first that $\widetilde I$ denotes the unique dyadic interval satisfying
$\widetilde I \supset I$ and $|\widetilde I| = 2|I|$.
Second, for a dyadic interval $K_0$ we denote its left half by
$K_0^\ell$ ($K_0^\ell \subset K_0$, $|K_0^\ell| = |K_0|/2$, $\inf K_0^\ell = \inf K_0$)
and its right half by $K_0^r$ ($K_0^r = K_0 \setminus K_0^\ell$).
We define the sets
\begin{equation*}
  E_{\widetilde I \times J}^\ell
  = \Union_{K_0\times L_0 \in \scr E_{\widetilde I\times J}} K_0^\ell\times L_0
  \quad\text{and}\quad
  E_{\widetilde I \times J}^r
  = \Union_{K_0\times L_0 \in \scr E_{\widetilde I\times J}} K_0^r\times L_0.
\end{equation*}
If $I$ is the left half of $\widetilde I$, then we put
\begin{equation}\label{eq:step-2:block-basis-collection-2.a}
  \scr E_{I\times J} = \{ K\times L_0 \in \scr Z_{I\times J}\, :\,
    K\times L_0 \subset  E_{\widetilde I\times J}^\ell
  \},
\end{equation}
Alternatively, if $I$ is the right half of $\widetilde I$, then 
\begin{equation}\label{eq:step-2:block-basis-collection-2.b}
  \scr E_{I\times J} = \{ K\times L_0 \in \scr Z_{I\times J}\, :\,
    K\times L_0 \subset  E_{\widetilde I\times J}^r
  \}.
\end{equation}

\begin{minipage}[H]{.7\linewidth}
  \textbf{Case~2.b: $\bm{I \neq [0,1]}$, $\bm{J \neq [0,1]}$.}
  The figure on the right depicts the transition from Case~2.a to Case~2.b.
  The light rectangles $I\times J$ are the ones covered by Case~2.b.
  The collection $\scr E_{I\times J}$ is defined in~\eqref{eq:step-2:block-basis-collection-2.c}.
  The collections $\scr E_{I_0\times J_0}$ indexed by the dark rectangles $I_0\times J_0$ are well
  defined before the first light rectangle is treated.
  \end{minipage}
  \begin{minipage}[H]{.25\linewidth}
    \begin{center}


        

      \includegraphics{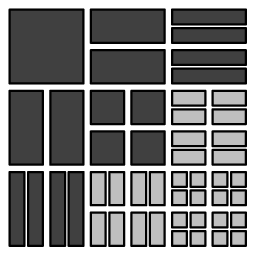}
    \end{center}
  \end{minipage}
  \begin{equation}\label{eq:step-2:block-basis-collection-2.c}
    \scr E_{I\times J} = \{ K\times L_0 \in \scr Z_{I\times J}\, :\,
    K\times L_0 \subset \scr E_{I\times \widetilde J}^* \}
  \end{equation}
\end{subequations}
(A comment on~\eqref{eq:step-2:block-basis-collection-2.c}: It is here where we bring in the
combinatorial harvest of Lemma~\ref{lem:comb-1} where we insisted that the coverings leave the
$y$--components $L_0$ intact, see Figure~\ref{fig:combinatorial-2}.
Moreover, the definition~\eqref{eq:step-2:block-basis-collection-2.c} would not  be possible if we
used fragmented coverings as depicted in Figure~\ref{fig:combinatorial-3}.)

In each of the above cases~\eqref{eq:step-2:block-basis-collection} we put
\begin{equation}\label{eq:block-basis-definition}
  b_{I\times J} = \sum_{K\times L\in \scr E_{I\times J}} h_{K\times L}.
\end{equation}

\subsection*{A first property of $\scr E_{I\times J}$}\hfill\\

We have now completed the construction part of the proof.
Before we turn to a detailed examination of the entire system
$\{\scr E_{I\times J}\, :\, I\times J\in\drec \}$ and $\{b_{I\times J}\, :\, I\times J\in\drec \}$
we analyze the intersections $K\times L\isect \scr E_{I\times J}^*$ where
$K\times L \in \scr E_{\widetilde I\times J}\union \scr E_{I\times \widetilde J}$.
Put
\begin{equation*}
  1-\alpha_{I\times J} = \prod_{I_0\times J_0 \drlesseq I\times J} (1 - \delta_{I_0\times J_0})^2.
\end{equation*}
We claim that
\begin{subequations}\label{eq:step-2:global-measure-estimate}
  \begin{equation}\label{eq:step-2:global-measure-estimate-a}
    \frac{1}{2} (1-\alpha_{I\times J}) |K\times L|
    \leq |(K\times L) \isect \scr E_{I\times J}^*|
    \leq \frac{1}{2} |K\times L|,
  \end{equation}
  for all $K\times L \in \scr E_{\widetilde I\times J}$ if $I\neq [0,1]$, as well as
  \begin{equation}\label{eq:step-2:global-measure-estimate-b}
    \frac{1}{2} (1-\alpha_{I\times J}) |K\times L|
    \leq |(K\times L) \isect \scr E_{I\times J}^*|
    \leq \frac{1}{2} |K\times L|,
  \end{equation}
  for all $K\times L \in \scr E_{I\times \widetilde J}$ if $J\neq [0,1]$.
\end{subequations}

Indeed, we only have to verify the left hand side estimates.
First, let $K\times L \in \scr E_{\widetilde I\times J}$.
Observe that since $\scr Y_{\widetilde I\times J} = \scr Y_{I\times J}$ and
$|\scr E_{I\times J}^*\isect ([0,1]\times L)|
= \frac{1}{2} |\scr X_{I\times J}^*(L)\times L|$ for all $L\in \scr Y_{I\times J}$,
we have
\begin{equation}\label{eq:step-2:global-measure-estimate-proof-1}
  |(K\times L) \isect \scr E_{I\times J}^*|
  = \frac{1}{2} |(K\times L) \isect (\scr X_{I\times J}^*(L)\times L)|.
\end{equation}
Obviously, by~\eqref{eq:step-2:local-measure-estimate}, the right hand side is larger than
\begin{equation}\label{eq:step-2:global-measure-estimate-proof-2}
  \frac{1}{2} (1-\delta_{I\times J}) |K\isect W_{I\times J}(L)|\, |L|.
\end{equation}
We go back over the course by which we have come and see that
\begin{equation}\label{eq:step-2:global-measure-estimate-proof-3}
  |K\isect W_{I\times J}(L)|
  \geq \prod_{\widetilde I\times J \drless I_0\times J_0 \drless I\times J}
    (1 - \delta_{I_0\times J_0}) |K|.
\end{equation}
Combining~\eqref{eq:step-2:global-measure-estimate-proof-1},
with~\eqref{eq:step-2:global-measure-estimate-proof-2}
and~\eqref{eq:step-2:global-measure-estimate-proof-3}
yields~\eqref{eq:step-2:global-measure-estimate-a}.
Second, let $K\times L \in \scr E_{I\times \widetilde J}$ and $J\neq [0,1]$.
By the definition of $\scr E_{I\times J}$ and~\eqref{eq:step-2:building-blocks} we have
\begin{equation}\label{eq:step-2:global-measure-estimate-proof-4}
  |(K\times L) \isect \scr E_{I\times J}^*|
  = \sum_{L_0 \in \scr Y_{I\times J}} |(K\times L) \isect (\scr X_{I\times J}^*(L_0)\times L_0)|.
\end{equation}
For each summand note the identity
\begin{equation}\label{eq:step-2:global-measure-estimate-proof-5}
  |(K\times L) \isect (\scr X_{I\times J}^*(L_0)\times L_0)|
  = |K \isect \scr X_{I\times J}^*(L_0)|\, |L\isect L_0|.
\end{equation}
As before, we have
\begin{equation}\label{eq:step-2:global-measure-estimate-proof-6}
  |K \isect \scr X_{I\times J}^*(L_0)|
  \geq (1-\delta_{I\times J}) |K \isect W_{I\times J}(L_0)|
\end{equation}
and
\begin{equation}\label{eq:step-2:global-measure-estimate-proof-7}
  |K\isect W_{I\times J}(L_0)|
  \geq \prod_{I\times\widetilde J \drless I_0\times J_0 \drless I\times J}
    (1 - \delta_{I_0\times J_0}) |K|.
\end{equation}
Next, we observe that by~\eqref{eq:step-2:global-measure-estimate-proof-5},
\eqref{eq:step-2:global-measure-estimate-proof-6} and
\eqref{eq:step-2:global-measure-estimate-proof-7}, the sum in the right hand side
of~\eqref{eq:step-2:global-measure-estimate-proof-4} is larger than
\begin{equation}\label{eq:step-2:global-measure-estimate-proof-8}
  \Big( \prod_{I\times \widetilde J\drless I_0\times J_0 \drlesseq I\times J}
    (1 - \delta_{I_0\times J_0})\Big) |K| \sum_{L_0 \in \scr Y_{I\times J}} |L\isect L_0|.
\end{equation}
Taking into account that $J \subset \widetilde J$, the Gamlen-Gaudet construction of
Case~1 gives
\begin{equation}\label{eq:step-2:global-measure-estimate-proof-9}
  \sum_{L_0 \in \scr Y_{I\times J}} |L\isect L_0|
  \geq \frac{1}{2} (1-\delta_{[0,1]\times J}) |L|.
\end{equation}
Finally, combining~\eqref{eq:step-2:global-measure-estimate-proof-8}
and~\eqref{eq:step-2:global-measure-estimate-proof-9}
with~\eqref{eq:step-2:global-measure-estimate-proof-4}
yields~\eqref{eq:step-2:global-measure-estimate-b}.


\subsection*{Essential properties of our construction}\hfill\\

\subsubsection*{Output of the inductive step}\hfill\\
Having completed the construction of $\{\scr E_{I\times J}\, :\, I\times J \in \drec_n\}$ we record
the following crucial properties.
First, \eqref{eq:step-1:global-measure-estimate} and~\eqref{eq:step-2:global-measure-estimate} imply
that for each $I_0\times J_0,\, I_1\times J_1 \in \drec_n$ such that
$I_0 \supset I_1,\, J_0 \supset J_1$ and $|I_0\times J_0| = 2\, |I_1\times J_1|$ we have
\begin{equation}\label{eq:summary:measure-estimate}
  \frac{1}{2} \prod_{I\times J \drlesseq I_1\times J_1}
  (1 - \delta_{I\times J})^2 |K\times L|
  \leq |(K\times L) \isect \scr E_{I_1\times J_1}^*|
  \leq \frac{1}{2} |K\times L|,
\end{equation}
for all $K\times L \in \scr E_{I_0\times J_0}$.
Second, \eqref{eq:proof:step-1:building_block_size}, \eqref{eq:proof:step-1:functions},
\eqref{eq:step-1:quasi-diag-estimate} and~\eqref{eq:step-1:block-basis-collection} as well
as~\eqref{eq:proof:step-2:building_block_size}, \eqref{eq:proof:step-2:functions},
\eqref{eq:step-2:quasi-diag-estimate} and~\eqref{eq:step-2:block-basis-collection} imply
\begin{equation}\label{eq:quasi-diag-estimate}
  \sum_{j=1}^{i-1} |\langle T^* b_j, h_{K\times L} \rangle| + |\langle Tb_j, h_{K\times L}
  \rangle|
  \leq \frac{(i-1)\, \tau_i}{\beta_i} |K\times L|,
\end{equation}
for all $i \in \bb N$ and $K\times L \in \scr E_i$.
Recall that $\scr E_i = \scr E_{I\times J}$ provided $i = \drindex(I\times J)$.

\subsubsection*{Bi-tree property}\hfill\\
The collection $\{\scr E_{I\times J}^*\, :\, I\times J \in \drec_n\}$ forms a bi-tree,
see~\eqref{eq:bi-tree}.
The bi-tree constant is determined by the local product structure~\eqref{eq:thm:quasi-diag:13}
verified below.
In particular
\begin{equation}\label{eq:bi-tree-estimate}
  \frac{1}{2} |I\times J| \leq |\scr E_{I\times J}^*| \leq |I\times J|.
\end{equation}

\subsubsection*{The local product structure of $\scr E_{I\times J}$}\hfill\\
Here, we exploit our choice of the constants $\delta_{I\times J}$,
see~\eqref{eq:constant_definition}.
We carry over~\eqref{eq:summary:measure-estimate} to each pair of nested dyadic rectangles.
Let $I_0\times J_0,\, I_1\times J_1 \in \drec$ such that
$I_0 = \pred^{i}(I_1)$ and $J_0 = \pred^j(J_1)$ for some $i,j \in \bb N_0$.
Then, iterating~\eqref{eq:summary:measure-estimate} yields
\begin{equation}\label{eq:thm:quasi-diag:13}
  \frac{1}{2}\, |K\times L|
  \leq 2^{i+j} |(K\times L) \isect \scr E_{I_1\times J_1}^*|
  \leq |K\times L|,
\end{equation}
for all $K\times L \in \scr E_{I_0\times J_0}$.
Our construction with its inherent complications permits us now verify the crucial
estimate~\eqref{eq:thm:quasi-diag:13}.
We present only the proof for the lower estimate since the verification of the upper estimate
follows the same line of reasoning.
Let $I_0\times J_0$ and $I_1\times J_1$ be a nested pair of dyadic rectangles as specified above.
We now define a path $p(I_0\times J_0, I_1\times J_1)$ of nested rectangles
$I^{(m)}\times J^{(m)}$ connecting $I_1\times J_1$ to $I_0\times J_0$ as follows.
We define $I^{(0)} = I_1$, $I^{(i+j)} = I_0$ and $J^{(0)} = J_1$, $J^{(i+j)} = J_0$ as well as
\begin{align*}
  I^{(m+1)} & = \widetilde I^{(m)}
  \quad\text{and}\quad
  J^{(m+1)} = J^{(m)},
  &\text{if\quad $0 \leq m \leq i-1$},\\
  I^{(m+1)} & = I^{(m)}
  \quad\text{and}\quad
  J^{(m+1)} = \widetilde J^{(m)},
  &\text{if\quad $i \leq m \leq i+j-1$}.
\end{align*}
Iterating the local property~\eqref{eq:summary:measure-estimate} along the path
$p=p(I_0\times J_0, I_1\times J_1)$ we obtain
\begin{equation*}
  |(K\times L) \isect \scr E_{I_1\times J_1}^*|
  \geq 2^{-(i+j)} \prod_{I\times J \in p} (1-\alpha_{I\times J}) |K\times L|,
\end{equation*}
where we put
\begin{equation*}
  1-\alpha_{I\times J} = \prod_{k \leq \drindex(I\times J)} (1-\delta_k)^2.
\end{equation*}
Since the length of the path $p$ is at most $2n$, we obtain
\begin{equation*}
  |(K\times L) \isect \scr E_{I_1\times J_1}^*|
  \geq 2^{-(i+j)} |K\times L| \Big(1 - 4n \sum_{k=1}^\infty  \delta_k\Big).
\end{equation*}
As we specified $\delta_k = 2^{-k}/(8 n)$ in~\eqref{eq:constant_definition} we see that
\eqref{eq:thm:quasi-diag:13} holds.

\subsection*{The boundedness of the orthogonal projection $Q$}\hfill\\

The collections of dyadic rectangles $\scr E_{I\times J}$ gives rise to the block basis
\begin{equation*}
  b_{I\times J} = \sum_{K\times L \in \scr E_{I\times J}} h_{K\times L}
\end{equation*}
and the orthogonal projection
\begin{equation*}
  Q(f) = \sum_{I\times J \in \drec_n}
  \big\langle f, \frac{b_{I\times J}}{\|b_{I\times J}\|_2} \big\rangle\,
  \frac{b_{I\times J}}{\|b_{I\times J}\|_2}.
\end{equation*}
Feeding the estimate~\eqref{eq:thm:quasi-diag:13} into Theorem~\ref{pro:projection} we obtain
that
\begin{equation*}
  \|Q\, :\, \hardys \to \hardys\| \leq C_2,
\end{equation*}
for some universal constant $C_2 > 0$.

\subsection*{The basis $\{b_i\}$ are almost eigenvectors for $T$}\hfill\\

\noindent
We show that we have
\begin{equation*}
  T b_i = \frac{|\langle T b_i, b_i \rangle|}{\|b_i\|_2^2} b_i +
  \text{tiny error}.
\end{equation*}
To be more precise, we claim that
\begin{equation}\label{eq:almost-eigenvectors:0}
  \sum_{j\, :\, j \neq i} | \langle T b_{j}, b_i \rangle |
  \leq \varepsilon_i \|b_{i}\|_2^2,
  \qquad \text{for all $i$}.
\end{equation}

We begin the proof of~\eqref{eq:almost-eigenvectors:0} by summing
estimate~\eqref{eq:quasi-diag-estimate} over all $K\times L \in \scr E_i$ to obtain
\begin{equation}\label{eq:quasi-diag-estimate-a}
  \sum_{j=1}^{i-1} |\langle T^* b_j, b_i \rangle| + |\langle Tb_j, b_i \rangle|
  \leq \frac{(i-1)\, \tau_i}{\beta_i} \|b_i\|_2^2.
\end{equation}
Reversing the roles of $i$ and $j$ in~\eqref{eq:quasi-diag-estimate-a} gives
\begin{equation}\label{eq:almost-eigenvectors:2}
  |\langle T b_j, b_i \rangle| = |\langle b_j, T^* b_i\rangle|
  \leq \frac{(j-1)\, \tau_j}{\beta_j} \|b_j\|_2^2,
  \qquad j \geq i+1.
\end{equation}
Taking the sum in~\eqref{eq:almost-eigenvectors:2} and adding~\eqref{eq:quasi-diag-estimate-a} we
get
\begin{equation}\label{eq:quasi-diag-estimate-b}
  \sum_{j\, :\, j\neq i} |\langle T b_j, b_i \rangle|
  \leq 2\sum_{j \geq i} \frac{(j-1)\, \tau_j}{\beta_j} \|b_j\|_2^2.
\end{equation}
Now, recall that in~\eqref{eq:proof:step-1:constant_definition} and~\eqref{eq:proof:step-2:constant_definition} we defined the constants $\tau_j$ by
\begin{equation}\label{eq:almost-eigenvector:constant_definition}
  \tau_j
  = \frac{2^{-j}}{4(j-1)} \beta_j \min_{k\leq j} \|b_k\|_2^2 \varepsilon_k
\end{equation}
Finally, plugging~\eqref{eq:almost-eigenvector:constant_definition} into~\eqref{eq:bi-tree-estimate}
yields
\begin{equation*}
  \sum_{j\, :\, j\neq i} |\langle T b_j, b_i \rangle|
  \leq 2^{-i} \varepsilon_i \|b_i\|_2^2,
\end{equation*}
which is certainly smaller than estimate~\eqref{eq:almost-eigenvectors:0}.

\section{Localization in bi--parameter BMO}\label{s:factorization}

\noindent
In this section we give the proof of our main theorem~\ref{thm:bmo-primary} restated below.

\begin{thm*}[\textbf{Main theorem~\ref{thm:bmo-primary}}]
  For any operator
  \begin{equation*}
    T\, :\, \bmos\rightarrow \bmos
  \end{equation*}
  the identity on $\bmos$ factors through
  $H = T$ or $H = \Id-T$, that is
  \begin{equation}\label{eqn:primarity-bmo}
    \vcxymatrix{\bmos \ar[r]^\Id \ar[d]_E & \bmos\\
      \bmos \ar[r]_H & \bmos \ar[u]_P}
    \qquad \|E\|\|P\| \leq C.
  \end{equation}
\end{thm*}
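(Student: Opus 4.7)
The plan is to implement, in full, the three-step localization program outlined in the introduction, using Theorem~\ref{thm:local-hardy} and standard duality between the finite-dimensional blocks $\bmos[n]$ and $\hardys[n]$.

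First, I invoke Wojtaszczyk's isomorphism to identify
\begin{equation*}
  \bmos \sim \Big(\sum_n \bmos[n]\Big)_\infty,
\end{equation*}
so that the given operator $T : \bmos \to \bmos$ is transferred to a bounded operator $\widetilde T$ on the $\ell_\infty$-sum of the finite-dimensional blocks. This step is essential because $\bmos$ itself is non-separable and admits projections that are not weak$^*$-continuous, so we cannot simply dualize from $\hardys$; working in the $\ell_\infty$-sum keeps us in a category where each coordinate is reflexive.

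Second, I carry out the reduction of $\widetilde T$ to a block-diagonal operator on $(\sum_n \bmos[n])_\infty$. Given any growth function $n \mapsto N(n,M)$ (which will be supplied by the finite-dimensional step below), I inductively select a sparse subsequence of coordinates $n_1 < N(n_1,M) \le n_2 < N(n_2,M) \le \cdots$ such that, on the sub-sum indexed by $(n_k)$, the operator $\widetilde T$ is, up to a summable error, block-diagonal. This is done by choosing, at each stage, the next $n_k$ so large that the images under $\widetilde T$ of the finitely many previously constructed blocks have norm $\le 2^{-k}$ in the $n_k$-coordinate direction; this is possible by the $\ell_\infty$-sum structure and the continuity of $\widetilde T$, combined with the fact that each block is finite dimensional (so has no inherent non-separability obstruction). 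The outcome is that $\widetilde T$ restricted to the selected blocks has the form $\bigoplus_k T_k + R$ with $\|R\|$ arbitrarily small, where $T_k : \bmos[N(n_k,M)] \to \bmos[N(n_k,M)]$ is the compression of $\widetilde T$.

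Third, for each diagonal block $T_k$ I apply the finite-dimensional factorization. Theorem~\ref{thm:local-hardy} is stated on $\hardys[N]$, but both $\hardys[N]$ and $\bmos[N]$ are finite-dimensional and satisfy $\bmos[N] = \hardys[N]^*$ isomorphically with uniformly bounded constants. Taking adjoints in diagram~\eqref{intro:eqn:finite-main-result-hardy} applied to $T_k^*$ (a bounded operator on $\hardys[N(n_k,M)]$) yields, for each $k$, a factorization of $\Id_{\bmos[n_k]}$ through either $T_k$ or $\Id - T_k$ with a universal norm bound. By a pigeonhole argument, one of the two alternatives, say $H = T$, occurs for infinitely many $k$; restricting to that subsequence we may assume $H = T_k$ uniformly.

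Fourth, I assemble. The factorizations $E_k : \bmos[n_k] \to \bmos[N(n_k,M)]$ and $P_k : \bmos[N(n_k,M)] \to \bmos[n_k]$ of uniformly bounded norm combine via the $\ell_\infty$-sum into operators $E = \bigoplus_k E_k$ and $P = \bigoplus_k P_k$ that factor the identity on $(\sum_k \bmos[n_k])_\infty$ through $\bigoplus_k T_k$. The small error $R$ from step two is absorbed via a geometric series inversion (as in the analogous step at the end of Section~\ref{s:results}). Transferring back through Wojtaszczyk's isomorphism yields the desired factorization of $\Id_{\bmos}$ through $H$.

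The main obstacle is step two: the reduction to diagonal operators on $(\sum_n \bmos[n])_\infty$ must be performed in a way that is compatible with the non-weak$^*$-continuous projections exemplified in the introduction, so that the selection of the sparse subsequence really does suppress the off-diagonal part of $\widetilde T$ in operator norm rather than only in some weaker sense. Once the diagonal form is achieved with small enough error, step three is an immediate application of Theorem~\ref{thm:local-hardy} together with finite-dimensional duality, and step four is routine $\ell_\infty$-sum bookkeeping.
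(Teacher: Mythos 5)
Your overall architecture (Wojtaszczyk's isomorphism, reduction to diagonal, finite-dimensional factorization, reassembly) matches the paper's, but your step two contains a genuine gap that the paper handles with real work. You claim that, because of the ``$\ell_\infty$-sum structure and continuity of $\widetilde T$,'' one can pick $n_k$ so large that the images of the finitely many previously chosen blocks have norm $\le 2^{-k}$ in the $n_k$-th coordinate. This is false: in $\big(\sum_n X_n\big)_\infty$, a bounded vector can have coordinates of norm comparable to $1$ in \emph{every} coordinate slot, so there is no coordinate-decay to exploit. Choosing a large index does nothing to shrink $p_{n_k}\widetilde T x$ for a fixed $x$. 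This is precisely why the paper introduces Theorem~\ref{thm:annihil-bmo}: instead of hoping that a coordinate of the image is small, one constructs inside $\bmos[N]$ a block-basis copy of $\bmos[n]$, together with a bounded orthogonal-type projection $Q$, such that $Q$ \emph{almost annihilates} the finite-dimensional subspace $F_n = p_{M(n)}\,T Q^{(n-1)}(X)$. The desired diagonal block is then $Q_n p_{M(n)} T$ compressed through the isomorphism $S_n$, not the naive coordinate compression. That construction inherits the full almost-diagonalization machinery of Theorem~\ref{thm:quasi-diag} and cannot be replaced by an index-growth argument.

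A second omission in your step two is the other half of the off-diagonal: you never suppress the contribution of the \emph{later} blocks to \emph{earlier} coordinates, i.e.\ the lower-triangular part $p_{M(n-1)} T P_{\Lambda_n}$. This is a separate issue from the one above. The paper achieves condition $\|p_{M(n-1)} T P_{\Lambda_n}\| \le \varepsilon_n$ by a finite-dimensionality-plus-pigeonhole argument: since $X_{M(n-1)}$ has finite-dimensional dual, it suffices to find, for each of finitely many functionals $\varphi$, an infinite $\Lambda_n \subset \Lambda_{n-1}$ with $\|\varphi P_{\Lambda_n}\|$ small, and that is accomplished by partitioning $\Gamma$ into many disjoint infinite pieces and using the $\ell^\infty$-disjointness bound. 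Without this step, your operator $\bigoplus_k T_k + R$ does not have $\|R\|$ small, and the geometric-series inversion at the end fails. Your steps one, three (duality between $\hardys[N]$ and $\bmos[N]$, consistent with the paper's implicit use of Theorem~\ref{thm:local-hardy} in Theorem~\ref{thm:diagonal-bmo}), and four are fine, but step two needs Theorem~\ref{thm:annihil-bmo} and the lower-triangular pigeonhole argument to be correct.
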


\noindent
The structure of the proof given below follows the general localization method introduced by
Bourgain~\cite{bourgain:1983} to treat factorization problems.
We first list the basic steps of the argument:
\begin{enumerate}[(i)]
\item We exploit Wojtaszczyk's isomorphism asserting that
  \begin{equation*}
    \bmos \sim \Big( \sum_n \bmos[n] \Big)_\infty.
  \end{equation*}

\item We reduce the factorization problem to the case where the operator $T$ is a diagonal operator
  on $\Big( \sum_n \bmos[n] \Big)_\infty$.

\item We invoke our finite dimensional factorization Theorem~\ref{thm:local-hardy} to infer that in
  fact Theorem~\ref{thm:bmo-primary} holds true for diagonal operators.
\end{enumerate}

We say an operator
$D\, :\, \big( \sum_n \bmos[n] \big)_\infty \longrightarrow \big( \sum_n \bmos[n] \big)_\infty$ is a
diagonal operator if there exists a sequence of operators
$A_n\, :\, \bmos[n] \rightarrow \bmos[n]$ such that
\begin{equation*}
  D(f_1,f_2,\ldots,f_n,\ldots) = (A_1 f_1,A_2 f_2,\ldots,A_n f_n,\ldots).
\end{equation*}
The following theorem provides the reduction to diagonal operators.

\begin{thm}\label{thm:product-diag-bmo}
  For any linear operator
  $T : \big( \sum_n \bmos[n] \big)_\infty\to \big( \sum_n \bmos[n] \big)_\infty$ there exists a
  diagonal operator
  \begin{equation*}
    D\, :\, \big( \sum_n \bmos[n] \big)_\infty \longrightarrow \big( \sum_n \bmos[n] \big)_\infty
  \end{equation*}
  and bounded linear operators
  \begin{equation*}
    R,E : \big( \sum_n \bmos[n] \big)_\infty\to \big( \sum_n \bmos[n] \big)_\infty
  \end{equation*}
  such that
  \begin{equation}\label{eq:product-diag-bmo}
    D = RTE
    \qquad\text{and}\qquad
    \Id - D = R(\Id - T)E.
  \end{equation}
\end{thm}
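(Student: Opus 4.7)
The plan is to build $E$ as a spreading embedding and $R$ as an ultrafilter-limit projection, arranged so that $RE = \Id_X$ automatically. Once $RE = \Id$ is secured, the two identities in~\eqref{eq:product-diag-bmo} collapse to the single requirement that $D := RTE$ be diagonal, since
\begin{equation*}
  R(\Id - T)E = RE - RTE = \Id - D.
\end{equation*}
So the entire content of the theorem is: produce bounded $R,E$ with $RE = \Id$ and with $RTE$ diagonal.

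First I would fix a partition $\bb N = \bigsqcup_{n \geq 1} A_n$ into pairwise disjoint infinite sets with $\min A_n \geq n$, enumerate each $A_n = \{k^n_1 < k^n_2 < \cdots\}$, and identify $X = \bigl(\sum_n \bmos[n]\bigr)_\infty$ via Wojtaszczyk's isomorphism, writing $X_n = \bmos[n]$ and $P_n,J_n$ for the coordinate projection and inclusion. Using the isometric inclusions $X_n \embed X_k$ (valid for $k \geq n$ because $\bmos[n] \subset \bmos[k]$) I would define the spreading map $E\, :\, X \to X$ by
\begin{equation*}
  (Ef)_k = f_n \in X_k
  \qquad\text{whenever } k \in A_n,
\end{equation*}
which has norm $1$ by the $\ell_\infty$-sum structure. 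Then, after fixing a non-principal ultrafilter $\mathcal U$ on $\bb N$, I would define
\begin{equation*}
  (Rg)_n = \lim_{j \to \mathcal U} P_n(g_{k^n_j}),
\end{equation*}
where the limit exists and is continuous because $X_n$ is finite-dimensional. Then $(REf)_n = \lim_{j\to\mathcal U} P_n(f_n) = f_n$, so $RE = \Id$.

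The heart of the proof is to show that, after possibly refining the partition and the ultrafilter, $D = RTE$ is diagonal, i.e.\ that for every $m \neq n$ and every $f_m \in X_m$,
\begin{equation*}
  \lim_{j\to \mathcal U} P_n\bigl((T E_m f_m)_{k^n_j}\bigr) = 0,
\end{equation*}
where $E_m : X_m \to X$ is the restriction of $E$ to the $m$-th summand. My plan is a Cantor-style diagonal procedure: since each $X_m$ is finite-dimensional, enumerate a dense family of unit vectors $\{f_m^{(i)}\}_i \subset X_m$ for each $m$; for each quadruple $(m,n,i,\ell)$ with $m\neq n$ inductively thin the sets $A_n$ so that the sequence $P_n((TE_m f_m^{(i)})_{k^n_j})$ is Cauchy as $j \to \infty$, and then refine $\mathcal U$ along a well-chosen ultrafilter to drive these Cauchy limits to zero. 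Thinning is possible because $\|T\| < \infty$ implies boundedness of each sequence in the finite-dimensional target $X_n$, so a weak$^*$-compactness argument (valid since $\bmos$ is a dual space and $X_n$ is finite-dimensional) extracts convergent subsequences. Because the supports $A_m$ are disjoint from $A_n$ and are pushed out to arbitrarily large indices by the thinning, one obtains the vanishing of the off-diagonal limits.

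The main obstacle will be precisely this off-diagonal vanishing step: one must exhibit that the ultrafilter-average of $P_n(TE_mf_m)_k$ along $k \in A_n$ really is zero when $m \neq n$, rather than merely bounded. Controlling this requires leveraging the specific asymptotic structure of bounded operators on the $\ell_\infty$-sum of the finite-dimensional blocks $\bmos[n]$, combined with the freedom to pass to a subsequence of $A_n$ and to a refined ultrafilter at every stage of the countable diagonalization; the rest of the construction (boundedness of $R,E$, the identity $RE = \Id$, and the formal deduction of the two factorizations) is then routine.
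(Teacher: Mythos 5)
Your reduction to ``$RE = \Id$ plus diagonality of $RTE$'' is correct, and your spreading map $E$ and ultrafilter projection $R$ do yield $RE = \Id$ with norm $1$. The thinning/gliding-hump step you sketch also correctly controls the part of $T$ that sends high coordinates to a fixed low coordinate (this corresponds to condition~\eqref{enu:product-diag-bmo:4} in the paper). But the heart of the theorem --- making the off-diagonal limits $\lim_{j\to\mathcal U} P_n\bigl((TE_mf_m)_{k^n_j}\bigr)$ vanish when $m < n$ --- cannot be achieved by subsequences and ultrafilter refinement, and this is where your proposal has a genuine gap.

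Here is why the step fails as stated. Fix $m < n$ and a unit vector $f_m \in X_m$; the vector $E_mf_m$ has a fixed support $A_m$, and the sequence $P_n\bigl((TE_mf_m)_{k}\bigr)$, $k\in A_n$, is bounded in the finite--dimensional space $X_n$. Any ultrafilter limit of such a sequence is a cluster point of it, and nothing forces those cluster points to be $0$ --- the operator $T$ may, for instance, map a low--coordinate vector to a fixed non--small element of every high coordinate. No thinning of $A_n$, no further thinning of $A_m$, and no refinement of $\mathcal U$ can change a non--vanishing cluster value into $0$. So ``the specific asymptotic structure'' you invoke at the end is not a technicality; it is the entire content of the theorem, and your proposal leaves it unsupplied. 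The paper supplies exactly this missing ingredient with Theorem~\ref{thm:annihil-bmo}: given the finite--dimensional subspace $F_n = p_{M(n)} T Q^{(n-1)}(X)$, one chooses a block--basis embedding $S_n : \bmos[n]\to\bmos[M(n)]$ (not the natural inclusion) together with a bounded projection $Q_n$ onto its range such that $\|Q_n|_{F_n}\|\le\varepsilon_n$. Thus the ``earlier--to--later'' off--diagonal contribution is pushed into the part of $\bmos[M(n)]$ that $Q_n$ nearly kills. That annihilation property is a structural fact about $\bmos[n]$ (a simplified form of the almost--diagonalization Theorem~\ref{thm:quasi-diag}, proved via the combinatorial block--basis construction), and it is precisely what distinguishes this situation from an arbitrary $\ell_\infty$--sum of finite--dimensional spaces, for which your generic argument would otherwise ``prove'' a false statement. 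In short: the embedding $E$ must be built out of carefully chosen block bases, not the natural inclusions, and $R$ must incorporate the projections $Q_n$, not a bare ultrafilter limit.
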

\noindent
We remark that~\eqref{eq:product-diag-bmo} implies $RE = \Id$.

The proof of Theorem~\ref{thm:product-diag-bmo} relies on the repeated application of the following
theorem which is a simplified variant of Theorem~\ref{thm:quasi-diag}.
\begin{thm}\label{thm:annihil-bmo}
  Let $n \in \mathbb N$ and $\varepsilon > 0$, then there exists an $N=N(n,\varepsilon)$ so that the
  following holds.
  For any $n$--dimensional subspace $F \subset \bmos[N]$, there exists a block--basis
  $\{b_{I\times J}\}$ in $\bmos[N]$ satisfying the following conditions.
  \begin{enumerate}[(i)]
  \item The map $S\, :\, \bmos[n] \rightarrow \bmos[N]$ defined as the linear extension of
    $h_{I\times J} \mapsto b_{I\times J}$ satisfies
    \begin{equation*}
      \frac{1}{C}\, \| f \| \leq \| S f \| \leq C\, \| f \|,
    \end{equation*}
    with universal constant $C$.

  \item The orthogonal projection $Q\, :\, \bmos[N] \rightarrow \bmos[N]$ given by
    \begin{equation*}
      Q(f) = \sum_{I\times J \in \drec_n}
      \big\langle f, \frac{b_{I\times J}}{\|b_{I\times J}\|_2} \big\rangle\,
      \frac{b_{I\times J}}{\|b_{I\times J}\|_2}
    \end{equation*}
    is bounded by
    \begin{equation*}
      \|Q f\|_{\bmos[N]} \leq C \|f\|_{\bmos[N]},
      \qquad f \in \bmos[N],
    \end{equation*}
    for some universal constant $C$ and almost annihilates the space $F$,
    \begin{equation}\label{eq:thm:annihil-bmo}
      \| Q f \|_{\bmos[N]} \leq \varepsilon\, \|f\|_{\bmos[N]},
      \qquad f\in F.
    \end{equation}
  \end{enumerate}
\end{thm}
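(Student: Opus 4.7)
The plan is to specialize the inductive construction of Theorem~\ref{thm:quasi-diag} to a fixed, rather than operator-generated, list of test functions. Instead of feeding the combinatorial Lemma~\ref{lem:comb-1} at stage $i$ with the operator-dependent pair $T^*b_j, Tb_j$ for $j<i$, I would use throughout the induction the fixed finite collection $\{f_1,\ldots,f_n\}\subset F$, chosen once at the outset as an Auerbach basis of $F$ with $\|f_j\|_{\bmos[N]}=1$, so that every $f\in F$ with $\|f\|_{\bmos[N]}\le 1$ admits a decomposition $f=\sum a_j f_j$ with $|a_j|\le 1$. Because the input data to the lemma do not vary with the stage, the threshold parameter $\tau$ is calibrated once at the outset in terms of $n$ and $\varepsilon$ only; this is the source of the simplification relative to Theorem~\ref{thm:quasi-diag}.

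Running the inductive construction of Section~\ref{ss:quasi-diag} along the linear order $\drless$ on $\drec_n$, at each invocation of Lemma~\ref{lem:comb-1} I would set $x_j = f_j/n$ and $y_j = 0$, so that $\sum_j\|x_j\|_{\bmos}\le 1$; the selected local covers then consist of dyadic rectangles $K\times L$ satisfying $\sum_j|\langle f_j, h_{K\times L}\rangle|\le n\tau\,|K\times L|$. The Gamlen--Gaudet refinements produce collections $\scr E_{I\times J}$ whose bi-tree geometry and local product structure~\eqref{eq:thm:quasi-diag:13} are verified verbatim as in Section~\ref{ss:quasi-diag} using the same constants $\delta_i = 2^{-i}/(8n)$ from~\eqref{eq:constant_definition}. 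Theorem~\ref{pro:projection} yields both the Haar-equivalence of $\{b_{I\times J}\}$ in $\hardys$ and the boundedness of the orthogonal projection $Q$ on $\hardys$; because the $b_{I\times J}$ are disjointly supported sums of Haar functions, these conclusions transfer by standard duality and unconditionality to $\bmos[N]$ with universal constants, yielding assertion~(i) and the boundedness half of~(ii).

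For the almost-annihilation~\eqref{eq:thm:annihil-bmo}, given $f = \sum a_j f_j\in F$ with $|a_j|\le 1$, summing the local bound over $K\times L \in \scr E_{I\times J}$ gives
\begin{equation*}
  |\langle f, b_{I\times J}\rangle|
  \le n\tau\sum_{K\times L \in \scr E_{I\times J}} |K\times L|
  = n\tau\,\|b_{I\times J}\|_2^2,
\end{equation*}
so the coefficients $c_{I\times J} = \langle f, b_{I\times J}\rangle/\|b_{I\times J}\|_2^2$ of $Qf$ in the $b$-expansion satisfy $|c_{I\times J}|\le n\tau$. Since dyadic rectangles of fixed side lengths contained in an open set $\Omega$ are pairwise disjoint, and $\drec_n$ has only $(n+1)^2$ pairs of admissible side lengths, we obtain $\sum_{I\times J\in \drec_n,\,I\times J\subset \Omega}|I\times J|\le (n+1)^2|\Omega|$; combined with the Haar-equivalence of $\{b_{I\times J}\}$ in $\bmos$, this yields $\|Qf\|_{\bmos[N]}\le C(n+1)n\tau$, and choosing $\tau \le \varepsilon/(Cn(n+1))$ delivers~\eqref{eq:thm:annihil-bmo}. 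The main obstacle is the bookkeeping that sustains the local product estimate~\eqref{eq:thm:quasi-diag:13} through the induction; this is handled in Section~\ref{ss:quasi-diag}, and because our threshold $\tau$ is absolute rather than inductively updated, the argument there transfers without modification.
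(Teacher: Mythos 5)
Your strategy is the right one and matches what the paper has in mind: run the inductive construction of Section~\ref{ss:quasi-diag}, feed Lemma~\ref{lem:comb-1} with the fixed Auerbach basis of $F$ (taking $x_j = f_j/n$, $y_j = 0$) instead of the operator-generated data, observe that the threshold $\tau$ can therefore be fixed once and for all, verify the bi-tree and local product estimates verbatim, transfer Theorem~\ref{pro:projection} to $\bmos[N]$ by duality, and close with the Carleson-constant bound on $\drec_n$. All of that is correct as far as it goes.

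However, there is a genuine gap at the first stage of the induction. The construction of Section~\ref{ss:quasi-diag} sets $\scr E_{[0,1]\times[0,1]} = \{[0,1]\times[0,1]\}$, so $b_{[0,1]\times[0,1]} = h_{[0,1]\times[0,1]}$ is produced \emph{without} any application of Lemma~\ref{lem:comb-1}, and hence the local frequency weight of the single rectangle $[0,1]^2$ is not controlled. Your key estimate $|\langle f, b_{I\times J}\rangle| \le n\tau\,\|b_{I\times J}\|_2^2$ therefore fails for $I\times J = [0,1]^2$; the coefficient $c_{[0,1]^2} = \langle f, h_{[0,1]^2}\rangle$ can be as large as $\|f\|_{\bmos}$. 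Concretely, taking $F = \spn\{h_{[0,1]^2}\}$ your construction yields $Qh_{[0,1]^2} = h_{[0,1]^2}$, so $\|Qf\|_{\bmos} = \|f\|_{\bmos}$, contradicting~\eqref{eq:thm:annihil-bmo}. This is different from Theorem~\ref{thm:quasi-diag}, where~\eqref{thm:quasi-diag-iii} only constrains off-diagonal entries $K\times L \ne I\times J$ and therefore places no demand on $\langle Tb_1, b_1\rangle$; your requirement constrains \emph{every} coefficient $\langle f, b_{I\times J}\rangle$, including the first. The remedy is to replace the trivial initialization by one more application of Lemma~\ref{lem:comb-1}: take $\scr E_{[0,1]\times[0,1]}$ to be the resulting collection $\scr L_{\ell}([0,1]^2) = \{[0,1]\times L : |L| = 2^{-\ell}\}$ of thin horizontal strips with small local frequency weight, so that $b_{[0,1]^2} = \sum_{[0,1]\times L} h_{[0,1]\times L}$; this preserves the structural property~\eqref{eq:proof:step-1:first-coordinate} needed by the Case~1 step and gives the missing bound for $i=1$. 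With that modification the rest of your argument goes through.
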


\begin{proof}
  The proof of Theorem~\ref{thm:annihil-bmo} is a repetition of the almost--diagonalization argument
  in the proof of Theorem~\ref{thm:quasi-diag}, where
  condition~\eqref{eq:thm:annihil-bmo} is simpler to realize than~\eqref{thm:quasi-diag-iii}.
  The situation is analogous to the one parameter case treated in~\cite[290--291]{mueller:2005}.
\end{proof}

\begin{proof}[Proof of Theorem~\ref{thm:product-diag-bmo}]
  The proof of Theorem~\ref{thm:product-diag-bmo} is quantitative and finite dimensional in nature.
  The estimates pertaining specifically to bi--parameter BMO are provided by
  Theorem~\ref{thm:annihil-bmo}.
  The reduction procedure itself is analogous to the corresponding localization theorems
  in~\cite{blower:1990, bourgain:1983, mueller:2005, wark:2007}.

  Let $\varepsilon > 0$ and $\varepsilon_n = 4^{-n-3}\varepsilon$.
  Subsequently, we write $N = N(n) = N(n,\varepsilon_n)$ as specified in
  Theorem~\ref{thm:annihil-bmo}.
  We further abbreviate
  \begin{equation*}
    X_n=\bmos[n]
    \qquad\text{and}\qquad
    X = (\sum_n X_n)_\infty.
  \end{equation*}
  Let $p_j\, :\, X \to X_j$ denote the projection onto the $j$--th coordinate.
  Given a subset $\Lambda$ of $\mathbb N$ we define
  $P_\Lambda\, :\, X \to X$ by
  \begin{equation*}
    p_j P_\Lambda x =
    \begin{cases}
      x_j, & \text{if $j\in \Lambda$}\\
      0, & \text{otherwise}
    \end{cases}
    \qquad\text{for all $x = (x_n)_n \in X$, $j \in \mathbb N$}.
  \end{equation*}

  We will now inductively define an increasing sequence of integers $M(n)$, a decreasing sequence of
  infinite subsets $\Lambda_n$ of $\mathbb N$, subspaces $F_n$ of $X_{M(n)}$
  (see~\eqref{eq:product-diag-bmo-0} below), projections $Q_n\, :\, X_{M(n)}\to X_{M(n)}$ and
  isomorphisms $S_n:X_n\to Q(X_{M(n)})$
  such that
  \begin{enumerate}[(i)]
  \item $\|Q_n\| \leq C$ and $\|S_n\| \|S_n^{-1}\| \leq C$,
    \label{enu:product-diag-bmo:1}
  \item $\|Q_n(x)\|_{X_{M(n)}} \leq \varepsilon_n \|x\|_{X_{M(n)}}$ for all $x \in F_n$,
    \label{enu:product-diag-bmo:2}
  \item $M(n) \in \Lambda_n$ and $\min \Lambda_n > M(n-1)$,
    \label{enu:product-diag-bmo:3}
  \item $\|p_{M(n-1)} T P_{\Lambda_n}\| \leq \varepsilon_n$.
    \label{enu:product-diag-bmo:4}
  \end{enumerate}

  We begin the construction by defining $M(1) = 1$, $\Lambda_1 = \mathbb N$, $Q_1 = \Id$ and
  $F_1 = \{0\}$.
  Assume we have completed our construction for all $1 \leq j \leq n-1$.
  We will now choose an infinite subset $\Lambda_n$ of $\Lambda_{n-1}$ such
  that~\eqref{enu:product-diag-bmo:3} and~\eqref{enu:product-diag-bmo:4} are satisfied.
  Since $X_{M(n-1)}$ is finite dimensional it suffices to show that for every $\varphi \in X^*$
  there exists an infinite subset $\Lambda_n$ of $\Lambda_{n-1}$ such that
  \begin{equation*}
    \|\varphi P_{\Lambda_n}\| \leq \varepsilon_n.
  \end{equation*}
  To this end let $\varphi \in X^*$ and $\Gamma = \{k \in \Lambda_{n-1}\, :\, k > M(n-1)\}$.
  Assume that for each infinite subset $\Lambda$ of $\Gamma$ we have that
  $\|\varphi P_\Lambda\| > \varepsilon_n$.
  Partition the infinite set $\Gamma$ into $m$ disjoint infinite sets $\Gamma_1,\ldots,\Gamma_m$ and
  choose $x_1,\ldots,x_m \in X$ with $\|x_j\| = 1$ such that
  $\varphi P_{\Gamma_j} x_j > \varepsilon_n$.
  Observe that the disjointness of the $\Gamma_j$ implies that
  $\big\|\sum_{j=1}^m P_{\Gamma_j} x_j\big\| \leq 1$, thus
  \begin{equation*}
    m \varepsilon_n
    < \sum_{j=1}^m \varphi P_{\Gamma_j} x_j
    \leq \|\varphi\|.
  \end{equation*}
  This gives a contradiction for sufficiently large $m$, showing~\eqref{enu:product-diag-bmo:3}
  and~\eqref{enu:product-diag-bmo:4}.

  Let the projection $Q^{(n-1)}\, :\, X \to X$ be defined by
  \begin{equation*}
    p_j Q^{(n-1)} x =
    \begin{cases}
      Q_k x_k, & \text{if $j=M(k)$ and $j \leq M(n-1)$}\\
      0, & \text{otherwise}
    \end{cases}
  \end{equation*}
  for all $x = (x_k)_k \in X$ and $j \in \mathbb N$.
  Then define the subspace $W_n = T Q^{(n-1)}(X)$ and choose
  $M(n) = \min\{k \in \Lambda_n\, :\, k \geq N(\dim W_n,\varepsilon_n)\}$,
  where $N = N(\dim W_n,\varepsilon_n)$ is the constant appearing in Theorem~\ref{thm:annihil-bmo}.
  We next specify a subspace $F_n$ by putting
  \begin{equation}\label{eq:product-diag-bmo-0}
    F_n = p_{M(n)} W_n.
  \end{equation}

  Theorem~\ref{thm:annihil-bmo} asserts that there exists a projection $Q_n$ and an isomorphism
  $S_n\, :\, X_n\to Q_n(X_{M(n)})$ such that~\eqref{enu:product-diag-bmo:1}
  and~\eqref{enu:product-diag-bmo:2} are satisfied.

  We will now define the maps $I,Q : X \to X$ by
  \begin{equation*}
    p_j I x =
    \begin{cases}
      S_n x_n, & \text{if $j=M(n)$}\\
      0, & \text{otherwise}
    \end{cases}
    \qquad\text{and}\qquad
    p_j Q x =
    \begin{cases}
      Q_n x_n, & \text{if $j=M(n)$}\\
      0, & \text{otherwise}.
    \end{cases}
  \end{equation*}
  for all $x = (x_n)_n \in X$ and $j \in \mathbb N$.
  Define $J : Q(X) \to X$ by
  \begin{equation*}
    J y = (S_n^{-1} y_{M(n)})_n
    \qquad \text{for all $y = (y_n)_n \in Q(X)$}.
  \end{equation*}
  Note that $JQI = \Id$ and that therefore
  \begin{equation}\label{eq:product-diag-bmo-1}
    \widehat T = JQTI
  \end{equation}
  satisfies
  \begin{equation}\label{eq:product-diag-bmo-2}
    \Id - \widehat T = JQ(\Id-T)I
  \end{equation}
  and moreover $\widehat T$ is a small perturbation of a diagonal operator.
  Indeed, define $D\, :\, X \to X$ by $D = (p_n\widehat T p_n)_n$ and observe that $D$ is a bounded
  diagonal operator for which
  \begin{equation}\label{eq:product-diag-bmo-3}
    \|\widehat T - D\| < \varepsilon,
  \end{equation}
  since we chose $\varepsilon_n = 4^{-n-3} \varepsilon$.
  This is a consequence of conditions~\eqref{enu:product-diag-bmo:1}
  to~\eqref{enu:product-diag-bmo:4}.
  A standard perturbation argument shows finally the existence of the operators
  \begin{equation*}
    R,E : X\to X
  \end{equation*}
  such that
  \begin{equation*}
    D = RTE
    \qquad\text{and}\qquad
    \Id - D = R(\Id - T)E.
    \qedhere
  \end{equation*}
\end{proof}

In Theorem~\ref{thm:product-diag-bmo} we provided the reduction of the general factorization
theorem~\ref{thm:bmo-primary} to the case of diagonal operators.
We now turn to the remaining last step: we show that the factorization theorem holds true for
diagonal operators.
\begin{thm}\label{thm:diagonal-bmo}
  Let $D$ be a diagonal operator on $\big( \sum_n \bmos[n] \big)_\infty$.
  Then the identity factors through $H = D$ or $H = \Id - D$, that is
  \begin{equation*}
    \vcxymatrix{\big( \sum_n \bmos[n] \big)_\infty \ar[r]^\Id \ar[d]_E
        & \big( \sum_n \bmos[n] \big)_\infty\\
        \big( \sum_n \bmos[n] \big)_\infty \ar[r]_H
        & \big( \sum_n \bmos[n] \big)_\infty \ar[u]_P}
      \qquad \|E\|\|P\| \leq C,
  \end{equation*}
  where $C$ is a universal constant.
\end{thm}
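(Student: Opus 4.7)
The plan is to combine the finite dimensional factorization of Theorem~\ref{thm:local-hardy} with the product structure of $X = \big(\sum_n \bmos[n]\big)_\infty$. The key observation is that a diagonal operator acts coordinate-wise, so the factorization can be built component-by-component and then assembled by a Pelczynski-style bookkeeping.

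First I would extract a $\bmos$-version of the finite dimensional factorization by duality. Since $\bmos[N] = (\hardys[N])^*$ in finite dimensions, every operator $S : \bmos[N] \to \bmos[N]$ with $\|S\| \leq M$ is of the form $S = T^*$ for some $T : \hardys[N] \to \hardys[N]$ with $\|T\| = \|S\|$. Applying Theorem~\ref{thm:local-hardy} to $T$ yields the factorization $\Id_{\hardys[n]} = PHE$ with $H \in \{T,\Id-T\}$ and $\|E\|\|P\| \leq C$; dualizing gives $\Id_{\bmos[n]} = E^* H^* P^*$ with $H^* \in \{S,\Id-S\}$. Thus, for every $n \in \bb N$ and $M > 0$ there is $N(n,M)$ such that every $S : \bmos[N] \to \bmos[N]$ with $\|S\| \leq M$ factors $\Id_{\bmos[n]}$ through $S$ or $\Id-S$ with universal constants.

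Next, write $D(f_1,f_2,\ldots) = (A_n f_n)_n$ with $M = \sup_n \|A_n\|$. For each $n$ set $N_n = N(n,M)$ and apply the BMO factorization above to $A_{N_n}$: this yields $E_n : \bmos[n] \to \bmos[N_n]$, $P_n : \bmos[N_n] \to \bmos[n]$, and $H_n \in \{A_{N_n}, \Id - A_{N_n}\}$ with $P_n H_n E_n = \Id_{\bmos[n]}$ and $\|E_n\|\|P_n\| \leq C$. By pigeonhole, an infinite set $\mathcal{M} \subset \bb N$ can be chosen on which $H_n$ is always the same alternative; by further thinning I may assume the $N_n$, $n \in \mathcal{M}$, are all distinct. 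Accordingly take $H = D$ or $H = \Id - D$.

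To lift the coordinate-wise data to a global factorization, let $Y = \big(\sum_{n \in \mathcal{M}} \bmos[n]\big)_\infty$, identified with the subspace of $X$ supported on indices in $\{N_n : n \in \mathcal{M}\}$. Define $E_0 : Y \to X$ by sending $(y_n)_{n \in \mathcal{M}}$ to the sequence whose $N_n$-th coordinate equals $E_n y_n$ and whose remaining coordinates are zero, and define $P_0 : X \to Y$ by $(z_k)_k \mapsto (P_n z_{N_n})_{n \in \mathcal{M}}$. The $\ell^\infty$-sum structure gives $\|E_0\|,\|P_0\| \leq C$, and $P_0 H E_0 = \Id_Y$ by the coordinate identity $P_n H_n E_n = \Id_{\bmos[n]}$. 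Finally, Wojtaszczyk's isomorphism yields $X \sim \bmos \sim Y$; fixing an isomorphism $U : X \to Y$ and setting $E = E_0 U$, $P = U^{-1} P_0$ gives $PHE = \Id_X$ with $\|E\|\|P\| \leq C\|U\|\|U^{-1}\|$.

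The main obstacle is the bookkeeping: running the pigeonhole so that the global choice of $H$ matches each $H_n$ simultaneously, ensuring the $N_n$ are distinct so that $H$ acts on each selected coordinate as the correct $H_n$, and invoking Wojtaszczyk's isomorphism $X \sim Y$ to reabsorb the sparse subsequence indexed by $\mathcal{M}$. The dualization in the first step is essentially free in finite dimensions, and once the coordinate-wise structure is matched the verification $PHE = \Id_X$ is direct.
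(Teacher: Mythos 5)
Your proposal is correct and follows essentially the same route as the paper: apply the finite-dimensional factorization to each $A_{N(n)}$, pigeonhole onto an infinite set where the choice of $H_n$ is constant, and assemble a global factorization on the $\ell^\infty$-sum using Wojtaszczyk's isomorphism. You make explicit two steps the paper's very terse proof leaves implicit — the duality transfer of Theorem~\ref{thm:local-hardy} from $\hardys$ to $\bmos$ (the paper silently invokes a $\bmos$ version) and the concrete construction of the global $E$ and $P$ from the coordinate-wise factorizations — and both are handled correctly.
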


\begin{proof}
  Let $A_n\, :\, \bmos[n]\rightarrow \bmos[n]$ be the linear map defining the diagonal operator $D$,
  that is
  \begin{equation*}
    D(f_1,f_2,\ldots,f_n,\ldots) = (A_1 f_1,A_2 f_2,\ldots,A_n f_n,\ldots).
  \end{equation*}
  By Theorem~\ref{thm:local-hardy} the identity on $\bmos[n]$ factors through $H_n = A_{N(n)}$ or
  $H_n = \Id - A_{N(n)}$, that is
  \begin{equation*}
    \vcxymatrix{\bmos[n] \ar[r]^\Id \ar[d]_{E_n} & \bmos[n]\\
      \bmos[N] \ar[r]_{H_n} & \bmos[N] \ar[u]_{P_n}}
      \qquad \|E_n\|\|P_n\| \leq C,
  \end{equation*}
  for some universal constant $C > 0$.
  If there exists an infinite sequence $\{k(n)\}$ so that $H_{k(n)} = A_{N(k(n))}$, then the
  identity on $\big( \sum_n \bmos[n] \big)_\infty$ factors through $D$.
  If $H_{k(n)} = \Id - A_{N(k(n))}$, then the identity factors through $\Id - D$.
\end{proof}

We now combine theorems~\ref{thm:product-diag-bmo} and~\ref{thm:diagonal-bmo} and derive Theorem~\ref{thm:bmo-primary}.
\begin{proof}[Proof of Theorem~\ref{thm:bmo-primary}]
  By Wojtaszczyk's isomorphism, see~\cite{wojtaszczyk:1979}, the Banach space
  $\bmos$ is isomorphic to the infinite sum of its finite dimensional building blocks
  $\big( \sum_n \bmos[n] \big)_\infty$.
  Hence, in Theorem~\ref{thm:bmo-primary} we replace operators on $\bmos$ by operators on
  $\big( \sum_n \bmos[n] \big)_\infty$.
  Moreover, by Theorem~\ref{thm:product-diag-bmo}, it suffices to consider only \textbf{diagonal}
  operators on $\big( \sum_n \bmos[n] \big)_\infty$.
  In Theorem~\ref{thm:diagonal-bmo} we proved that for any diagonal operator $D$ on
  $\big( \sum_n \bmos[n] \big)_\infty$ the identity factors through $H = D$ or $H = \Id - D$, that
  is
  \begin{equation*}
    \vcxymatrix{\big( \sum_n \bmos[n] \big)_\infty \ar[r]^\Id \ar[d]_E
        & \big( \sum_n \bmos[n] \big)_\infty\\
        \big( \sum_n \bmos[n] \big)_\infty \ar[r]_H
        & \big( \sum_n \bmos[n] \big)_\infty \ar[u]_P}
      \qquad \|E\|\|P\| \leq C.
  \end{equation*}
  for some universal constant $C > 0$.
\end{proof}

\bibliographystyle{abbrv}
\bibliography{bibliography}

\end{document}